\shorttitle{The stochastic Bessel operator at high temperatures}
\newcolumntype{M}[1]{>{\centering\arraybackslash}m{#1}}
\newcommand{\eps}{\varepsilon}
\newcommand{\cD}{\mathcal{D}}
\newcommand{\gl}{\lambda}
\newcommand{\gL}{\Lambda}
\begin{document}

\title{The Stochastic Bessel operator at high temperatures}\\
\begin{center}
\author[CEREMADE, Université Paris Dauphine - PSL]{Hugo Magaldi}\address{Université Paris Dauphine - PSL, Place du Maréchal de Lattre de Tassigny, 75016 Paris, France}
\end{center}



\begin{abstract}
We know from Ram\'irez and Rider \cite{DiffusionAtTheRandomMatrixHardEdge} that the hard edge of the spectrum of the $\beta$-Laguerre ensemble converges, in the high-dimensional limit, to the bottom of the spectrum of the stochastic Bessel operator. Using stochastic analysis techniques, we show that, in the high temperatures limit, the rescaled eigenvalues point process of the stochastic Bessel operator converges to a limiting point process characterized with coupled stochastic differential equations.
\end{abstract}

\medskip

\keywords{Stochastic Bessel operator; Beta-Laguerre ensemble}

\medskip


\section{Context and motivation} \label{sec_intro}

Coulomb gases, also known as $\beta$-ensembles, are probability measures on sets of points. In statistical physics, these points are seen as elementary particles on the real line confined under a random potential while repelling each other. The Dyson parameter $\beta$ acts as an inverse temperature and can affect the shape of the potential and the strength of the repulsive force. When $\beta$ is close to $\infty$, the system becomes ordered due to high repulsion between the particles. Conversely, when $\beta$ is close to $0$, weak repulsion leads to a disordered system. In recent years, $\beta$-ensembles and their connections to stochastic operators have attracted significant interest (see \cite{MatrixModelsForBetaEnsembles}, \cite{GlobalSpectrumFluctuations}, \cite{GapUniversalityOfGeneralizedWignerAndBetaEnsembles},
\cite{BetaEnsemblesStochasticAirySpectrum},
\cite{BulkUniversalityOfGeneralBetaEnsemblesWithNonConvexPotential},
\cite{UniversalityOfGeneralBetaEnsembles},  \cite{OrthogonalAndSymplecticMatrixModels}, \cite{LocalSemicircleLawInTheBulkForGaussianBetaEnsemble}, \cite{ContinuumLimitsOfRandomMatrices}, \cite{LocalSemicircleLawAtTheSpectralEdgeForGaussianBetaEnsembles}). 

\medskip

\noindent The size $n$ $\beta$-Laguerre ensemble is a two-parameter family of distributions on $(\mathbb{R}_+^{*})^n$ with density function with respect to the Lebesgue measure
\begin{align}\label{densityLaguerre}
\mathcal{P}_n^{\beta, a} \big(\gl^{(n)}_0 \leqslant \gl^{(n)}_1 \ldots \leqslant \gl^{(n)}_{n-1}\big)= \frac{1}{Z_{n}^{\beta,a}} \prod_{i < j} |\gl^{(n)}_i - \gl^{(n)}_j|^\beta \times \prod_{k=0}^{n-1} (\gl^{(n)}_k)^{\frac{\beta}{2}(a+1) -1} e^{-\frac{\beta}{2} \gl^{(n)}_k} 1_{\gl^{(n)}_k > 0}.
\end{align}
The parameters satisfy $\beta>0$ and $a > -1$, and $Z_{n}^{\beta,a}$ is an explicitely computable normalizing constant. When $\beta=1$ (resp. $\beta=2$, $\beta=4$), $\mathcal{P}_n^{\beta, a}$ is the density of the eigenvalues of a real (resp. complex, unitary) Wishart matrix of size $(n, n)$. This connection to random matrices was extended to any $\beta>0$ by Dimitriu and Edelman \cite[Theorem 3.4]{MatrixModelsForBetaEnsembles}, adapting earlier work from Silverstein \cite{TheSmallestEigenvalueOfALargeDimensionalWishartMatrix}.  They found a set of bidiagonal random matrices $\big(L_{n}^\beta\big)_{n>0}$ of size $(n,n+a)$ such that $\mathcal{P}_n^{\beta, a}$ is the density of the eigenvalues of $L_{n}^\beta(L_{n}^\beta)^T$.

\medskip

\noindent When $n$ tends to infinity, the rescaled empirical measure of the eigenvalues converges weakly a.s. to the Marchenko-Pastur distribution:
\begin{equation*}
\mu(\mathrm{d} x) = \frac{1}{2\pi x}\sqrt{x(4 -x)}\mathbf{1}_{[0,2]}(x)\mathrm{d} x.
\end{equation*}
We refer to \cite{OnSpectralMeasuresOfRandomJacobiMatrices} for a review of the asymptotic global statistics of classical $\beta$-ensembles.
 
\medskip

\noindent Because of the positivity constraint for the eigenvalues of $L_{n}^\beta(L_{n}^\beta)^T$, $0$ is called a \emph{hard edge} for the $\beta$-Laguerre ensemble. To study local statistics at the hard edge in the high-dimensional limit, Ram\'irez and Rider \cite{DiffusionAtTheRandomMatrixHardEdge} introduced a stochastic operator as limiting object for the $\beta$-Laguerre ensemble.

\begin{definition}[The stochastic Bessel operator] \label{def_3_SBO} ~\\ \emph{
Let $B$ be a standard Brownian motion on $\mathbb{R}_+$. For $\beta>0$ and $a>-1$, the stochastic Bessel operator (known as SBO) is the random differential operator
\begin{gather*}
\mathfrak{G}^{\beta,a} = -\frac{1}{m(x)}\frac{\mathrm{d}}{\mathrm{d} x}\Big(\frac{1}{s(x)}\frac{\mathrm{d}}{\mathrm{d} x}\Big), \\ 
m(x)=\exp\big(-(a+1)x-\frac{2}{\sqrt{\beta}}B(x)\big), \quad s(x)=\exp\big(ax+\frac{2}{\sqrt{\beta}}B(x)\big),
\end{gather*}
defined on a subset of $L^2(\mathbb{R}_+,m)$ with Dirichlet and Neumann boundary conditions at $0$ and at infinity, respectively.
}\end{definition}

\noindent They showed a connection between the eigenvalues of the $\beta$-Laguerre ensemble at the hard edge and the low-lying eigenvalues of the SBO:

\begin{proposition}[Limit of the $\beta$-Laguerre ensemble at the hard edge \protect{\cite[Theorem 1]{DiffusionAtTheRandomMatrixHardEdge}}] \label{prop_SBOspec}~\\
With probability one, when restricted to the positive half-line with Dirichlet
conditions at the origin, $\mathfrak{G}^{\beta,a}$ has a discrete spectrum with single eigenvalues $0<\Lambda^{\beta,a}(0)<\Lambda^{\beta,a}(1)<\ldots \uparrow \infty$. Moreover, with $0<\gl^{(n)}_0 < \gl^{(n)}_1 < \ldots < \gl^{(n)}_n$ the ordered points of the $\beta$-Laguerre ensemble of size $n$,
\begin{equation*}
\big\lbrace n\gl^{(n)}_0, n\gl^{(n)}_1, \ldots, n\gl^{(n)}_k\big\rbrace \Longrightarrow \big\lbrace \Lambda^{\beta,a}(0)<\Lambda^{\beta,a}(1)<\ldots <\Lambda^{\beta,a}(k)\big\rbrace,
\end{equation*}
(jointly in law) for any fixed $k<\infty$ as $n \uparrow \infty$.
\end{proposition}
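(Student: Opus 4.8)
The plan is to pass through the bidiagonal matrix model of Dumitriu and Edelman and to take the limit at the level of the \emph{inverse} operators, whose kernels are explicit on both sides. Recall that $\{\gl^{(n)}_0,\dots,\gl^{(n)}_{n-1}\}$ is the spectrum of the positive $n\times n$ Jacobi matrix $J_n:=L_n^\beta(L_n^\beta)^T$, whose entries are sums and products of independent $\chi$ random variables; wanting the \emph{smallest} eigenvalues of $J_n$ is the same as wanting the \emph{largest} eigenvalues of the matrix $G_n:=(nJ_n)^{-1}$. Since $J_n$ is tridiagonal, $G_n$ is semiseparable, $(G_n)_{ij}=\phi^{(n)}_{i\wedge j}\,\psi^{(n)}_{i\vee j}$, where $\phi^{(n)},\psi^{(n)}$ solve the two ``half-line'' difference equations attached to $J_n$ with boundary conditions mimicking a Dirichlet condition at the hard end and vanishing flux at the far end; both are explicit products and partial sums built from the $\chi$'s. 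On the continuum side, $\mathfrak{G}^{\beta,a}$ is a Krein--Feller string operator, regular at $0$ (Dirichlet) and limit-point at $\infty$ (vanishing flux), so its inverse on $L^2(\mathbb{R}_+,m)$ is the integral operator with the $\min$-type Green kernel $G(x,y)=\int_0^{x\wedge y}s(t)\,\mathrm dt=\phi(x\wedge y)\,\psi(x\vee y)$, where $\phi(x)=\int_0^x s$ and $\psi\equiv 1$. The statement then reduces to a limit theorem: under an embedding $\mathbb{R}^n\hookrightarrow L^2(\mathbb{R}_+,m)$ by suitably rescaled and weighted step functions on a grid adapted to the model's $\chi$-parameters, show $G_n\to(\mathfrak{G}^{\beta,a})^{-1}$ strongly enough to transport the top of the spectrum.

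The randomness is exactly Brownian. The $\chi$-ratios entering $\phi^{(n)}$ and $\psi^{(n)}$ have, after taking logarithms, independent increments; with the correct rescaling of the index — dictated by the digamma asymptotics of $\mathbb{E}[\log\chi]$ and by the $\chi$-parameters of the model, which turn out to require a logarithmic time change — their partial-sum processes fall under Donsker's invariance principle and converge in law to exponentials of (linear drift $+$ Brownian motion), the drift producing the factors $e^{\pm(a+1)x}$, $e^{\pm ax}$ and the Brownian part being precisely the $B$ of Definition~\ref{def_3_SBO} with coefficient $2/\sqrt{\beta}$ (the $2$ coming from the squares that appear in $G_n$). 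After a Skorokhod coupling this convergence may be taken almost sure, and then the embedded kernels satisfy $G_n(x,y)\to G(x,y)$ almost surely and pointwise.

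The genuinely delicate part — and the main obstacle — is to upgrade this pointwise convergence to the operator convergence required at the level of eigenvalues. What must be proved is an \emph{a priori}, uniform-in-$n$ bound making the $G_n$ uniformly Hilbert--Schmidt on the embedded space — equivalently, ruling out concentration of the low-lying eigenvectors of $J_n$ near the far end of the chain — which has to be extracted from moment and large-deviation control of the random walks driving $\phi^{(n)},\psi^{(n)}$, and which mirrors the integrability $\int\int G(x,y)^2 m(x)m(y)\,\mathrm dx\,\mathrm dy<\infty$ making $(\mathfrak{G}^{\beta,a})^{-1}$ Hilbert--Schmidt on $L^2(\mathbb{R}_+,m)$; a companion Hardy--Muckenhoupt inequality, valid because $\big(\int_r^\infty m\big)\big(\int_0^r s\big)$ stays bounded in $r$, shows in addition that $\Lambda^{\beta,a}(0)>0$. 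Granting this tightness, $G_n\to(\mathfrak{G}^{\beta,a})^{-1}$ in Hilbert--Schmidt norm along the coupling, hence every ordered eigenvalue converges; so for each fixed $k$ the top $k+1$ eigenvalues of $G_n$, namely $\big(n\gl^{(n)}_0\big)^{-1}>\dots>\big(n\gl^{(n)}_k\big)^{-1}$, converge jointly in law to the top $k+1$ eigenvalues of $(\mathfrak{G}^{\beta,a})^{-1}$, and inverting gives the claimed convergence of the point process. Finally, simplicity of each $\Lambda^{\beta,a}(j)$ and $\Lambda^{\beta,a}(j)\uparrow\infty$ are standard consequences of Sturm oscillation theory for the almost surely regular string operator with compact resolvent. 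An equivalent route replaces this resolvent comparison by a Mosco / $\Gamma$-convergence argument for the quadratic forms $n\|(L_n^\beta)^Tv\|^2$ against $\int\tfrac1s|f'|^2\,\mathrm dx$, but it rests on the same tail control.
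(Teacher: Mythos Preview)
The paper does not prove this proposition at all: it is quoted verbatim as a known result from Ram\'irez and Rider \cite[Theorem 1]{DiffusionAtTheRandomMatrixHardEdge} and used as background for the new Theorem~\ref{thm_main}, so there is no ``paper's own proof'' to compare against. Your sketch is, in fact, a reasonable outline of the original Ram\'irez--Rider argument itself --- passing to the inverse operator, identifying the semiseparable Green kernel of the bidiagonal matrix, reading off the Brownian limit of the log-$\chi$ walks via an invariance principle, and then pushing Hilbert--Schmidt convergence to eigenvalue convergence --- and you correctly flag the uniform-in-$n$ Hilbert--Schmidt control as the place where the real work sits; in the original paper this tightness is indeed the technical core and is handled by careful moment estimates on the discrete Green's function entries rather than by a one-line appeal. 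Since the present paper simply imports the result, there is nothing further to reconcile.
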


\noindent In this paper, we study the convergence of the lowest eigenvalues of the SBO in the high temperatures limit, when the inverse temperature parameter $\beta$ tends to $0$.

\section{Our result}

When $\beta$ tends to $0$, the smallest eigenvalues of the SBO get close to the hard edge at $0$ at an exponential rate. In order to get a non trivial limit, we therefore consider the rescaled eigenvalues
\begin{align} \label{eq_eig_resc}
\mu^{\beta}(k) := \beta \ln\big(1/\Lambda^{\beta,a}(k)\big), \ k \geqslant 0. 
\end{align}
Note that we thus reverse the ordering of the eigenvalues, and that any $\mu^{\beta}(k)$ is positive for $\beta$ small enough. We restrict ourselves to the case $a>0$ because the estimations computed in Section \ref{sec_6_expl} and Section \ref{sec_6_tight} differ if $a \in (-1,0]$. Since the parameter $a$ is set, we omit it from our notations going forward.

\begin{theorem}[Convergence of the low-lying eigenvalues of the SBO]\label{thm_main}
When $\beta$ tends to $0$, the rescaled eigenvalues point process of the SBO $(\mu^{\beta}(k),\; k \geqslant 0)$ converges in law towards a random simple point process on $\mathbb{R}_+$ which can be described using coupled SDEs.
\end{theorem}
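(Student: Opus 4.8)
The plan is to realize the eigenvalues of $\mathfrak{G}^{\beta,a}$ through the Riccati/shooting picture associated with the second-order operator, and then track how the relevant diffusion degenerates as $\beta\to 0$. Concretely, $\Lambda$ is an eigenvalue of $\mathfrak{G}^{\beta,a}$ iff the solution of the associated Sturm–Liouville problem, started from the Dirichlet condition at $0$, satisfies the Neumann condition at $+\infty$; writing $p_\Lambda(x) = \frac{1}{s(x)}\psi'(x)/\psi(x)$ (or the appropriate phase function) one gets a first-order SDE in $x$, driven by the Brownian motion $B$, whose coefficients depend on $\Lambda$ and on $\beta$ through $s(x)=\exp(ax+\tfrac{2}{\sqrt\beta}B(x))$. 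The number of eigenvalues below $\Lambda$ equals the number of blow-ups (oscillations) of this Riccati diffusion on $\mathbb{R}_+$, by the classical oscillation theory used in \cite{DiffusionAtTheRandomMatrixHardEdge}; this converts the point process $(\Lambda^{\beta,a}(k))_k$ into a functional of a one-parameter family of diffusions indexed by the spectral parameter.

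Next I would perform the change of variables dictated by the rescaling \eqref{eq_eig_resc}: set $\Lambda = e^{-\mu/\beta}$ and change the spatial variable so that the $\tfrac{2}{\sqrt\beta}B(x)$ term is absorbed — the natural move is a time change by the additive functional $\int_0^x s$, or equivalently running the diffusion in the scale given by $s$, after which the Brownian contribution appears at the right order in $\beta$. Under this substitution the Riccati equation should, after Taylor expansion in $\beta$, reorganize into a system: a slowly varying "environment" process coming from $ax+\tfrac{2}{\sqrt\beta}B$ (which at the relevant scale behaves like a time-changed Brownian motion of unit order) coupled to the phase/counting variable that records crossings. The goal is to show the coupled pair converges, in the sense of finite-dimensional distributions plus tightness, to a limiting system of coupled SDEs whose successive passage times are exactly the limiting points $\mu^\beta(k)\Rightarrow\mu^\infty(k)$. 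This is the strategy by which similar high-temperature limits (stochastic Airy, the Gaussian $\beta$-ensemble) have been handled, and it is consistent with the paper's announced description "using coupled SDEs."

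The key steps, in order: (i) write the exact Riccati SDE for the SBO and recall the oscillation-theoretic characterization of eigenvalue counts; (ii) implement the $\Lambda=e^{-\mu/\beta}$ substitution together with the $s$-driven time/scale change, and expand the drift and diffusion coefficients to leading order in $\beta$; (iii) identify the limiting coupled SDE system and prove existence/uniqueness and non-explosion for it, plus the fact that its crossing/passage times form a simple point process on $\mathbb{R}_+$ accumulating only at $+\infty$; (iv) prove convergence of the prelimit diffusions to the limit — martingale problem or strong approximation of the driving noises — uniformly on compacts in $x$ and locally uniformly in the spectral parameter $\mu$; (v) transfer this to convergence of the point processes via a continuity argument for the passage-time functional, using the tightness estimates referenced in Section \ref{sec_6_tight} to control behaviour near the boundaries and rule out escape of mass. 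The main obstacle I expect is step (iv)–(v) near the singular endpoints: the coefficients involve $e^{ax}$ and the exponential of a Brownian motion, so the diffusion is non-uniformly elliptic and the change of variables can push the relevant dynamics toward $x=0$ or $x=\infty$ in a $\beta$-dependent way; controlling the small-$x$ behaviour (where the Dirichlet condition and the $(\lambda^{(n)}_k)^{\frac{\beta}{2}(a+1)-1}$-type weight interact, and where the restriction $a>0$ is used) and the large-$x$ behaviour simultaneously, so that passage times are stable under the limit, is the technical heart of the argument and is presumably exactly what the explicit estimates of Section \ref{sec_6_expl} and the tightness bounds of Section \ref{sec_6_tight} are designed to supply.
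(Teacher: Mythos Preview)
Your high-level architecture (Riccati $\to$ oscillation count $\to$ rescale spectral parameter $\to$ diffusion convergence $+$ tightness) matches the paper, but the crucial transformation in step (ii) is not the one you propose, and your version would not readily yield the limit. The paper does \emph{not} absorb the $\tfrac{2}{\sqrt\beta}B$ via a random time change by $\int_0^x s$, nor does it split into an ``environment process'' coupled to a phase variable. Instead it takes the Riccati diffusion $p_\lambda^\beta$ (which satisfies an SDE with \emph{multiplicative} noise $\tfrac{2}{\sqrt\beta}p\,\mathrm{d}B$), sets $\Lambda^\beta=e^{-\mu/\beta}$, applies the deterministic time dilation $t\mapsto t/(4\beta)$, and then takes the \emph{logarithm}: $q_\mu^+(t)=\beta\ln p^\beta_{\Lambda^\beta}(t/(4\beta))$ when $p>0$, and a companion $q_\mu^-$ when $p<0$. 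It\^o's formula turns the multiplicative noise into additive unit Brownian noise, and the drift becomes $\tfrac14\big(a-e^{q^+/\beta}-e^{-(q^++t/4+\mu)/\beta}\big)$ (respectively with $-(a+1)$ for $q^-$). As $\beta\to0$ the two exponentials act as hard walls at $0$ and at the critical line $c_\mu(t)=-\mu-t/4$, so the limit $r_\mu$ is an explicit piecewise object: a Brownian motion with drift $a/4$ reflected at $0$ until it hits $c_\mu$, then restarting at $0$ with drift $-(a+1)/4$, alternating. The limiting point process is built from the hitting times of $c_\mu$ by $r_\mu$.

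Your proposed random time change by $\int_0^x s$ would entangle the noise with the time scale in a $\beta$-dependent way and does not naturally expose this two-regime reflected structure; in particular the sign change of $p$ (which is what forces the alternation between drifts $a/4$ and $-(a+1)/4$, and is where $a>0$ enters to guarantee $q^-$ always hits $c_\mu$) is invisible in an environment/phase decomposition. The technical work in Sections~\ref{sec_6_expl}--\ref{sec_6_tight} is then comparison of $q^\pm$ with $r^\pm$ on $[c_\mu+\delta,0]$, plus short-time estimates for the descent from $+\infty$ and the explosion below $c_\mu$, rather than a martingale-problem argument for a coupled system.
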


\noindent  The convergence holds for a well chosen topology of measures on $\mathbb{R}_+$, corresponding to a left-vague/right-weak topology (see after Proposition \ref{prop_cv_eig} for more details). The limiting point process is simple in the sense that all its points are distinct almost surely. We will characterize it (similarly to the SBO eigenvalues) through the coupled diffusions (\ref{SDEq+}) and (\ref{SDEq-}).

\medskip

\noindent  Usually, one expects that, when the temperature is high, the limiting point process is no longer repulsive and corresponds to a Poisson point process as the noise becomes dominant. Here, we get a different result. It comes from the competition between the strong repulsive interaction and attraction at the hard edge for small $\beta$ (see \eqref{densityLaguerre}). Because of this interaction, the repulsive factor does not disappear in the limit.

\section{Strategy of proof and limiting point process} \label{sec_strat}

\subsection{Riccati transform}

If $\psi$ solves the eigenvalues equation $\mathfrak{G}^{\beta,a} \psi=\Lambda^{\beta,a} \psi$ with initial conditions $\psi(0)=c_{0}$, $\psi^{\prime}(0)=c_{1}$, then Dumaz, Li and Valk\'o \cite[Proposition 7]{OperatorLevelHardToSoft} showed that $\left(\psi, \psi^{\prime}\right)$ is the unique strong solution of the stochastic differential equation system:
\begin{equation*}
\mathrm{d} \psi(x)=\psi^{\prime}(x) \mathrm{d} x, \quad \mathrm{d} \psi^{\prime}(x)=\frac{2}{\sqrt{\beta}} \psi^{\prime}(x) \mathrm{d} B(x)+\left((a+\frac{2}{\beta}) \psi^{\prime}(x)-\Lambda^{\beta,a} e^{-x} \psi(x)\right) \mathrm{d} x,
\end{equation*}
with the corresponding initial conditions, and $B$ the Brownian motion from Definition \ref{def_3_SBO}.

\noindent Using the Riccati transform $p=\psi^{'}/ \psi$ from Halperin  \cite{GreenFunctionsForAParticle} defined away from the zeros of $\psi$, Ram\'irez and Rider \cite{DiffusionAtTheRandomMatrixHardEdge} introduced the family of coupled diffusions $\big(p^{\beta}_\gl,\; \gl \in \mathbb{R}_+^*\big)$:

\begin{align} \label{eq_p}
	\mathrm{d} p^{\beta}_\gl(t) = \frac{2}{\sqrt{\beta}} p^{\beta}_\gl(t) \mathrm{d} B(t) + \big((a+\frac{2}{\beta})p^{\beta}_\gl(t) - p^{\beta}_{\gl}(t)^2 - \gl e^{-t}\big) \mathrm{d} t.
\end{align}
\noindent The choice $\psi(0)=0, \psi^{'}(0)=1$ provides the initial condition $p^{\beta}_\gl(0)=\infty$. The diffusion $p^{\beta}_\gl$ may explode to $-\infty$ if $\psi$ hits $0$, in which case it immediately restarts from $\infty$. Indeed, at the first hitting time $\tau$ of $0$ by $\psi$, we have $\psi^{'}(\tau)<0$ ($\psi^{'}(\tau)\leqslant 0$ since $\psi$ is positive near $0$ thus decreasing right before $\tau$, and $\psi^{'}(\tau) \neq 0$ since $\psi(\tau)=0$ and $\psi$ is not the null solution) and $\psi(t)<0$ after $\tau$. The same analysis can be conducted for potential successive hitting times of $0$. 

\noindent Ram\'irez and Rider proved the following equality in law that connects the family $\big(p^{\beta}_\gl,\; \gl \in \mathbb{R}_+^*\big)$ to the eigenvalues of  $\mathfrak{G}^{\beta,a}$:
\begin{equation}\label{eq_RR}
\forall k \in \mathbb{N}, \ \big\lbrace p_\lambda^\beta\mbox{ explodes at most } k \mbox{ times on } (0,\infty)\big\rbrace = \big\lbrace \Lambda^{\beta,a}(k)>\lambda\big\rbrace.
\end{equation}


\noindent It is crucial to note that the \emph{same Brownian motion} $B$ from Definition \ref{def_3_SBO} drives the whole family of SDEs in (\ref{eq_p}). It implies important properties such as the monotonicity of the number of explosions of $p^{\beta}_\gl$ (which turns out to be finite). In fact, the number of explosions of $p^{\beta}_\gl$ on $(0,\infty)$ is the number of eigenvalues of $\mathfrak{G}^{\beta,a}$ below $\gl$.


\subsection{Rescaled diffusions} We study the small beta limit of the family of diffusions $(p^{\beta}_\gl)$ from (\ref{eq_p}) when $\gl$ is properly rescaled with $\beta$, i.e. when $\gl$ is such that $\beta \ln (1/\gl)$ is of order $1$. 
 
\noindent  Notice that, when $p^{\beta}_\gl$ reaches $0$, the random term vanishes and the diffusion drift is negative. It implies that $p^{\beta}_\gl$ never reaches $0$ from below. It is easy to check that the hitting times of $0$ form a discrete point process.

\noindent Let us fix $\mu >0$ and set $\gL^\beta := \exp(-\mu/\beta)$. Using the property above, we define the diffusion $q^\beta_\mu(t)$, which equals 
\begin{align*}
\mathbb{R} \ni  q^+_\mu(t) &:= \beta \ln \big(p^{\beta}_{\gL^\beta}(t/(4 \beta)) \big) \text{ when } p^{\beta}_{\gL^\beta}(t/(4\beta)) >0, \\ 
\mathbb{R} \ni q^-_\mu(t) &:= -\beta \ln\big(-p^{\beta}_{\gL^\beta}(t/(4\beta))\big) - \mu - t/4 \text{ when } p^{\beta}_{\gL^\beta}(t/(4\beta)) <0, 
\end{align*}

\noindent Using the Itô formula $d\ln\big(Z(t)\big) = dZ(t)/Z(t) -d\langle Z \rangle_t /Z^2(t)$ for $Z(t)$ a positive Itô process, we can show that the diffusions $q^+_\mu(t)$ and $q^-_\mu(t)$ follow the following SDEs:
\begin{align}
	\mathrm{d} q^+_\mu &= \mathrm{d} W(t) + \frac{1}{4} \Big(a - \exp(q^+_\mu(t)/\beta) - \exp(-(q^+_\mu(t) + t/4 + \mu)/\beta) \Big) \mathrm{d} t, \label{SDEq+}\\
	\mathrm{d} q^-_\mu &= \mathrm{d} W(t) + \frac{1}{4} \Big(-(a+1) - \exp(q^-_\mu(t)/\beta) - \exp(-(q^-_\mu(t) + t/4 + \mu)/\beta) \Big) \mathrm{d} t, \label{SDEq-}
\end{align}
where $W$ is a Brownian motion corresponding to different rescalings of the initial Brownian motion $B$. These rescalings, including the new magnitude $t/(4\beta)$, are chosen so that the Brownian motion $W$ stands on its own, allowing an easier comparison between the terms of the drift in the $\beta \rightarrow 0$ limit.

\noindent The diffusions $q_\mu^\pm$ may explode to $-\infty$ in a finite time. By definition, the diffusion $q^{\beta}_\mu$ alternates between $q^+_\mu$ and $q^-_\mu$: it starts to follow $q_\mu^+$ and each time $q^{\beta}_\mu =q_\mu^+$ (resp. $q_\mu^-$) reaches $-\infty$, $q^{\beta}_\mu$ immediately restarts from $\infty$ and follows $q_\mu^-$ (resp. $q_\mu^+$). This alternating system is a consequence of the logarithmic transformation, which leads to a change of diffusion each time $p^{\beta}_{\gL^\beta}$ changes sign. \\
Let us define the critical line:
\begin{equation}\label{criticalline}
	c_{\mu}(t) := -\mu - t/4\,.
\end{equation}

\noindent This definition of $c_\mu$ makes sense in light of the last exponential term from (\ref{SDEq+}) and (\ref{SDEq-}). The positions of $q^+_\mu$ and $q^-_\mu$ with respect to $c_\mu$ condition the order of magnitude of this term and wether it prevails or not over the other terms of the drift.

\noindent Figure \ref{fig_6_typ} shows a sample path of the diffusion $q^{\beta}_\mu$. On this event, the diffusion $q^{\beta}_\mu$ explodes one time as $q_\mu^+$ (blue), then one time as $q_\mu^-$ (red), and then stays above the critical line $c_{\mu}(t)$ as $q_\mu^+$ (blue) and does not explode anymore.

\begin{figure}[H] 
    \centerline{\includegraphics[scale=0.2]{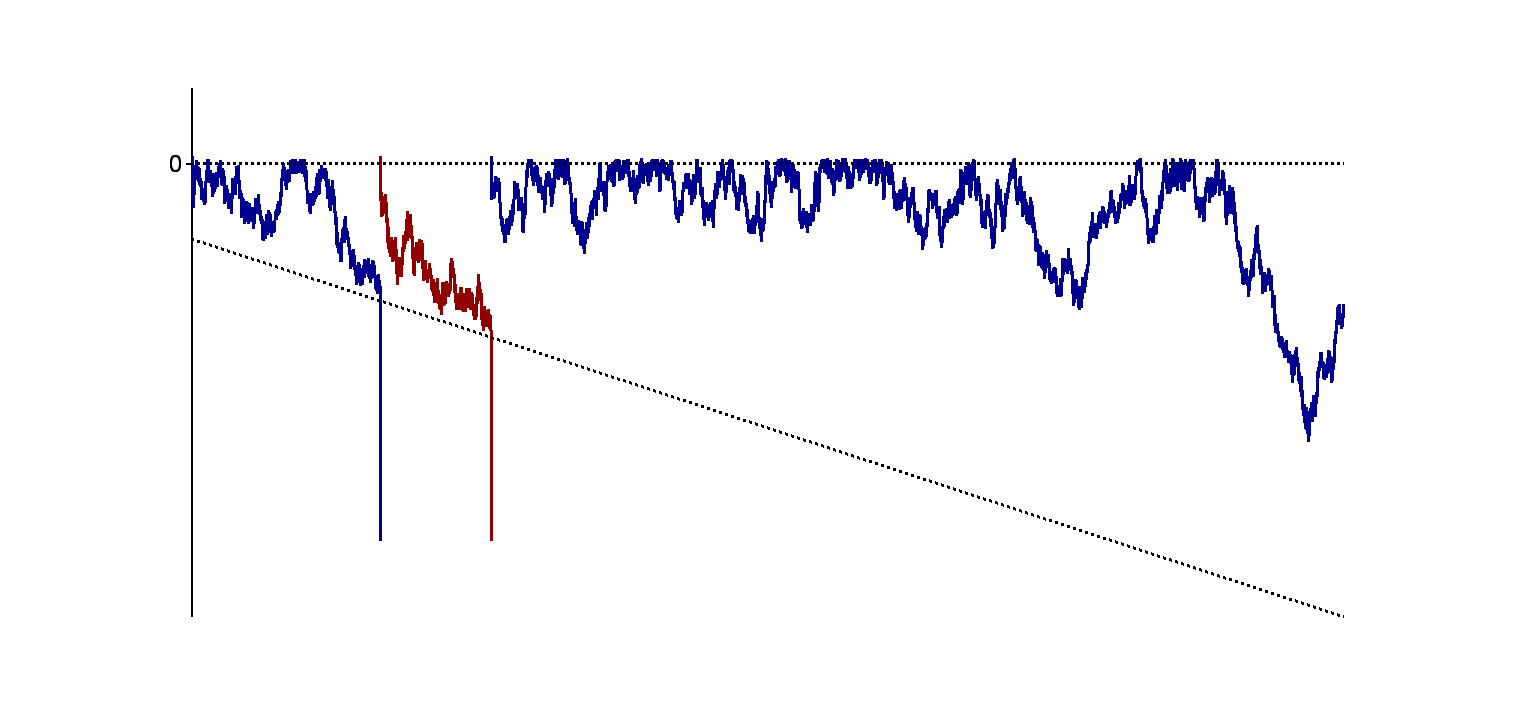}}
    \caption{Sample trajectory of the diffusion $q^{\beta}_\mu$.}
    \label{fig_6_typ}
\end{figure}

\noindent Roughly, the diffusion $q^+_\mu$ behaves as follows after each explosion time. First, it quickly goes down to values around $0$. Then, it spends some time between the line $t \mapsto c_{\mu}(t)$ and $0$, where it behaves as a reflected (downwards) Brownian motion with drift $a/4$. If it reaches the line $t \mapsto c_{\mu}(t)$ in a finite time then it quickly explodes to $-\infty$ after this hitting time.

\noindent  The behaviour of the diffusion $q^-_\mu$ is similar except that in the interval $[c_\mu(t),0]$, it behaves as a reflected (downwards) Brownian motion with drift $-(a+1)/4$. Therefore, it almost surely hits $c_{\mu}(t)$ when $a > 0$.

\noindent  There are two types of explosions for $q^{\beta}_\mu$: either $q^{\beta}_\mu$ explodes at a time $\xi^+$ such that $q^{\beta}_\mu(\xi^+) = q^+_\mu(\xi^+)$, which corresponds to the (rescaled) hitting times of $0$ by the initial diffusion $p^{\beta}_{\gL^\beta}$, or $q^{\beta}_\mu$ explodes at time $\xi^-$ such that $q^{\beta}_\mu(\xi^-) = q^-_\mu(\xi^-)$, in which case we get the (rescaled) explosion times of the initial diffusion $p^{\beta}_{\gL^\beta}$.

\noindent In the following, we denote by $\xi^+_\beta(0) < \xi^-_\beta(0) < \xi^+_\beta(1) < \xi^-_\beta(1) < \ldots$ the explosion times of the diffusion $q_\mu^\beta$ and by
\begin{align}\label{explosionsq-}
	\nu^\beta_\mu := \sum_{i \geqslant 0} \delta_{\xi^-_\beta(i)}\,
\end{align}
the measure corresponding to the (rescaled) explosions of $p^\beta_{\gL^\beta}$.

\noindent We will prove that, for a well-chosen topology, the trajectory of the diffusion $q^{\beta}_\mu$ converges in law, when $\beta$ tends to $0$, towards a non-trivial limit $r_\mu$, that we describe in the following paragraph.

\subsection{Description of the limiting point process} Let us now define $r_\mu$, the limiting diffusion of $q^{\beta}_\mu$, which will characterize the limiting point process from Theorem \ref{thm_main}. Its definition involves Brownian motions with drift reflected downwards at $0$. By definition, a Brownian motion with drift $\alpha$ reflected downwards at $0$ is a diffusion Markov process with infinitesimal operator $G \; :\; f \in \cD \mapsto \frac{1}{2} f'' + \alpha f'$ acting on the domain 
\begin{align*}
	\cD :=\Big\lbrace f \in C_b[0,\infty[, Gf \in C_b\big([0,\infty[\big),\; \lim_{x \downarrow 0} f'(x) = 0\Big\rbrace\,,
\end{align*}
where $C_b\big([0,\infty[\big)$ denotes the continuous and bounded functions on $[0,\infty[$.
Using the Skorohod problem, we can write this diffusion as
\begin{align*}
B(t) +\alpha t -  \sup_{s \leqslant t} \big(B(s) +\alpha s\big)\vee 0\,,
\end{align*}
where $B$ is a Brownian motion starting at $0$ or any negative point. 

\noindent To define the limiting diffusion $r_\mu$, we use \textit{the same Brownian motion} $W$ as in the diffusions $q_\mu^+$ and $q_\mu^-$ from (\ref{SDEq+}) and (\ref{SDEq-}). 

\begin{definition}[Limiting diffusion] \label{def_0_limdif} ~\\ \emph{
The limiting diffusion $r_\mu$ is defined as follows:}
\begin{itemize}
	\item \emph{It starts at $0$ at time $0$: $r_\mu(0) = 0$.}
	\item \emph{It then follows a Brownian motion with drift $a/4$  reflected downwards at $0$ (built from $W$)
\begin{align}\label{eq_r+}
	r^+:= W(t) + at/4 -  \sup_{s \leqslant t} \big(W(s) +as/4\big)\vee 0\,
\end{align}
until its first hitting time of the critical line $c_\mu(t)$ from \eqref{criticalline}.}
\item \emph{If it reaches $t \mapsto c_\mu(t)$ in a finite time, it immediately restarts at $0$ at this time, then follows a reflected downwards at $0$ Brownian motion with another drift $-(a+1)/4$ (also built from $W$) 
\begin{align}\label{eq_r-}
	r^-:= W(t) - (a+1)t/4 -  \sup_{s \leqslant t} \big(W(s) - (a+1)s/4\big)\vee 0\
\end{align}
until its first hitting time of the critical line $c_\mu(t)$ from \eqref{criticalline}.}
\end{itemize}
\emph{And so on, $r_\mu$ alternating between $r^+$ and $r^-$ each time it hits $t \mapsto c_\mu(t)$ and restarts at $0$.
Note that the probability that $r^+$ reaches the critical line decreases with time. On the other hand, since $a>0$, $r^-$ almost surely hits the critical line in a finite time.
}\end{definition}

\noindent Let $\xi^+_0(0) < \xi^-_0(0) < \xi^+_0(1) < \xi^-_0(1) < \ldots$ be the hitting times of the critical line $c_\mu$ by the diffusion $r_\mu$, where we omit to display the dependency on $\mu$ for lighter notations. We define the random measure associated to the point process $\big(\xi^-_0(k),\;k \geqslant 0\big)$:
\begin{align}\label{explosionsr}
	\nu^0_\mu := \sum_{i \geqslant 0} \delta_{\xi^-_0(i)}\,.
\end{align}

\noindent We then use the coupled random measures $\big(\nu^0_\mu,\; \mu >0\big)$ to define a discrete point process on $\mathbb{R}_+$. Since $\mu \in \mathbb{R}_+ \mapsto \nu_\mu(\mathbb{R}_+)$ decreases from $\infty$ to $0$ almost surely, it is easy to prove the following proposition:

\begin{proposition}[Limiting point process]
	There is a random variable $M_0$ valued in the Borel sets of $]0,\infty[$, such that, for all fixed $\mu_1 < \ldots < \mu_k$,
	\begin{align*}
		M_0[\mu_i,\infty[ = \nu^0_{\mu_i}(\mathbb{R}_+)\,.
	\end{align*}
Almost surely, the measure $M_0$ is unique, discrete, bounded from above and has an accumulation point at $0$.
\end{proposition}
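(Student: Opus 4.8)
The plan is to promote the almost surely non-increasing, integer-valued profile $\mu\mapsto N(\mu):=\nu^0_\mu(\mathbb{R}_+)$ to its associated point measure, and then to read the four asserted properties off the behaviour of $N$ near $0$ and near $\infty$. As recalled just before the statement, I take as given that, almost surely, $\mu\mapsto N(\mu)$ is non-increasing, finite on $]0,\infty[$, with $N(\mu)\uparrow+\infty$ as $\mu\downarrow 0$ and $N(\mu)\downarrow 0$ as $\mu\uparrow\infty$. The monotonicity is the only structural input: it is the Sturm/Riccati-type comparison stemming from the \emph{single} driving Brownian motion $W$ shared by all the $r_\mu$ — lowering $\mu$ lowers the critical line $c_\mu$ pointwise, which can only create additional crossings — exactly as for the eigenvalue counting function of the SBO.

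The first point is to choose a measurable version, since each $N(\mu)$ is only defined off a $\mu$-dependent null set. I would work on the countable index set $\mathbb{Q}_{>0}$: the family $\big(N(q)\big)_{q\in\mathbb{Q}_{>0}}$ is an honest collection of random variables, almost surely non-increasing with the above boundary behaviour, and I let $\bar N(\mu):=\lim_{\mathbb{Q}_{>0}\ni q\uparrow\mu}N(q)$ be its left-continuous extension to $]0,\infty[$, which is a.s.\ non-increasing, left-continuous, $\mathbb{Z}_{\geq 0}$-valued on $]0,\infty[$, with $\bar N(0^+)=+\infty$ and $\bar N(\infty^-)=0$. Letting $J$ be its (a.s.\ countable) discontinuity set, I set
\begin{align*}
M_0\ :=\ \sum_{x\in J}\big(\bar N(x^-)-\bar N(x^+)\big)\,\delta_x\,.
\end{align*}
Since $\bar N$ is a measurable functional of $\big(N(q)\big)_{q\in\mathbb{Q}_{>0}}$, so are $J$ and the atom masses, hence $M_0$ is a genuine random atomic measure, locally finite on $]0,\infty[$.

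Next I would verify the defining identity. Telescoping the jumps of $\bar N$ over $[\mu,\infty[$ — legitimate because $\bar N(\mu)<\infty$ leaves only finitely many jumps in each $[\mu,R]$ and $\bar N(R^+)\to 0$ — and using left-continuity of $\bar N$ gives $M_0[\mu,\infty[=\bar N(\mu)$ for every $\mu>0$. It then remains to identify $\bar N(\mu)$ with $\nu^0_\mu(\mathbb{R}_+)$ at a \emph{fixed} $\mu$, i.e.\ to prove $\nu^0_\mu(\mathbb{R}_+)=\lim_{q\uparrow\mu}\nu^0_q(\mathbb{R}_+)$ a.s.; this is the left-continuity of $N$ at the deterministic point $\mu$, which I would obtain from the monotone coupling together with the fact that the successive first-passage thresholds governing whether the $r^+$- and $r^-$-phases of $r_\mu$ reach the line $c_\mu$ have atomless laws, so that a.s.\ none of them sits exactly at the level attached to the deterministic $\mu$. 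Taking the intersection of the $k$ resulting null sets yields $M_0[\mu_i,\infty[=\nu^0_{\mu_i}(\mathbb{R}_+)$ for all $i$ simultaneously, for any fixed $\mu_1<\dots<\mu_k$ — which is precisely why the statement is phrased for fixed $\mu_i$ and not for all $\mu$ at once.

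Finally the four properties, each immediate from $\bar N$. Uniqueness: two atomic (hence $\sigma$-finite on $]0,\infty[$, being finite on every $[\varepsilon,R]$) measures that agree on the generating $\pi$-system $\{[\mu,\infty[:\mu>0\}$ coincide, by a Dynkin-class argument. Discreteness is built into the construction. Boundedness from above: $\bar N$ is integer-valued, non-increasing and tends to $0$, hence $\bar N\equiv 0$ on $[\mu^\star,\infty[$ for some a.s.\ finite $\mu^\star$, so $M_0$ charges no point beyond $\mu^\star$. Accumulation at $0$: for every $\varepsilon>0$, $M_0(]0,\varepsilon[)=M_0(]0,\infty[)-M_0([\varepsilon,\infty[)=\lim_{\mu\downarrow 0}\bar N(\mu)-\bar N(\varepsilon)=+\infty$, so there are infinitely many atoms in every neighbourhood of $0$, while $M_0([\varepsilon,R])\leq\bar N(\varepsilon)<\infty$ shows $0$ is the only accumulation point. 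I expect the one genuinely delicate step to be this last identification at a fixed $\mu$ (left-continuity of $N$ there, via atomlessness of the passage thresholds); granted the monotonicity input, everything else is bookkeeping on the non-increasing function $\bar N$.
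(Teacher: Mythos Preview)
Your proposal is correct and is precisely a fleshed-out version of the paper's own argument: the paper does not give a formal proof but only records, just before the proposition, that ``since $\mu \in \mathbb{R}_+ \mapsto \nu^0_\mu(\mathbb{R}_+)$ decreases from $\infty$ to $0$ almost surely, it is easy to prove the following proposition,'' and your construction of $M_0$ from the jumps of this non-increasing counting function is exactly the intended bookkeeping. You even isolate the one point the paper leaves implicit --- that at a \emph{fixed} deterministic $\mu$ one has $\bar N(\mu)=\nu^0_\mu(\mathbb{R}_+)$ a.s.\ because the atom locations of $M_0$ have diffuse law --- which is why the identity is stated only for finitely many fixed $\mu_i$.
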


\subsection{Strategy of the proof of Theorem \ref{thm_main}}

We can now state the desired convergence results towards the limiting measures which we need to prove Theorem \ref{thm_main}.

\begin{proposition}[Convergence of the explosion times of $p^\beta_{\gL^\beta}$]~\\\label{prop_cv_expl}
When $\beta$ tends to $0$, the measure $\nu^\beta_\mu$ (as in (\ref{explosionsq-})) converges to the measure $\nu^0_\mu$ (as in (\ref{explosionsr})), for the topology of weak convergence. 
\end{proposition}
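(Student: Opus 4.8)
The plan is to deduce the convergence of the explosion-time measures $\nu^\beta_\mu \Rightarrow \nu^0_\mu$ from a pathwise comparison between the rescaled diffusion $q^\beta_\mu$ and the limiting diffusion $r_\mu$, both driven by the same Brownian motion $W$. The heuristic described in the text — that $q^+_\mu$ (resp. $q^-_\mu$) falls quickly toward $0$ after each restart, then behaves like a Brownian motion with drift $a/4$ (resp. $-(a+1)/4$) reflected downward at $0$ while it stays in the band $[c_\mu(t),0]$, and then explodes to $-\infty$ in a short time after hitting $c_\mu(t)$ — has to be turned into quantitative estimates. The first step is therefore to establish, on the three relevant regimes (\emph{above} $0$, \emph{inside} the band, \emph{below} $c_\mu(t)$), uniform-in-$\beta$ control of the drift: when $q^\pm_\mu \geqslant \delta > 0$ the term $-\exp(q^\pm_\mu/\beta)$ dominates and forces an exponentially fast descent, so the time spent above any fixed level $\delta$ before the first visit to a neighbourhood of $0$ is $O(\beta \ln(1/\beta))$ with high probability; similarly, once $q^\pm_\mu \leqslant c_\mu(t) - \delta$, the term $-\exp(-(q^\pm_\mu + t/4 + \mu)/\beta)$ dominates and drives $q^\pm_\mu$ to $-\infty$ in time $O(\beta\ln(1/\beta))$. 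In the band $[c_\mu(t)+\delta, -\delta]$ both exponential terms are $O(e^{-\delta/\beta})$, so there $q^\pm_\mu$ is a Brownian motion with the stated drift up to a drift perturbation that vanishes uniformly as $\beta \to 0$.

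Using these estimates I would couple $q^\beta_\mu$ and $r_\mu$ excursion by excursion between successive restarts, by an induction on the number $k$ of explosions. On each such interval, the contribution of the two exponential terms to $q^\pm_\mu$ is, away from an $O(\beta\ln(1/\beta))$ initial/final layer, uniformly small, so $q^\pm_\mu$ restricted to the band converges in the sup norm on compacts to the Brownian motion with drift appearing in $r^\pm$; the Skorohod reflection map is Lipschitz for the sup norm, so the reflected-at-$0$ version converges as well, hence the first hitting time of the critical line $c_\mu$ by $q^\pm_\mu$ converges to that of $r^\pm$ (using that $r^\pm$ crosses $c_\mu$ transversally — a drifted Brownian motion a.s. does not touch a deterministic line tangentially, so the hitting time is a.s. continuous with respect to the path). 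After that hitting time, the explosion of $q^\beta_\mu$ occurs within $O(\beta\ln(1/\beta))$, so the explosion time $\xi^\pm_\beta(k)$ converges in probability to $\xi^\pm_0(k)$; the restart from $+\infty$ and the subsequent fast fall to a neighbourhood of $0$ reconciles the endpoint of the $q$-excursion with the restart at $0$ of $r_\mu$, closing the induction. One also has to rule out pathologies of $r_\mu$ and $q^\beta_\mu$: $r^+$ reaches $c_\mu$ with probability going to $0$ as time grows (the drift pushes it away), so almost surely only finitely many $r^+$-excursions reach the critical line, which together with the a.s.\ finiteness of $r^-$-excursion lengths for $a>0$ shows $\nu^0_\mu(\mathbb R_+) < \infty$ a.s.; the matching statement for $\nu^\beta_\mu$ follows from \eqref{eq_RR} and Proposition \ref{prop_SBOspec}, since $\nu^\beta_\mu(\mathbb R_+)$ is the number of eigenvalues of $\mathfrak G^{\beta,a}$ below $\gL^\beta$, which is a.s.\ finite.

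To conclude the weak convergence of $\nu^\beta_\mu = \sum_i \delta_{\xi^-_\beta(i)}$ to $\nu^0_\mu = \sum_i \delta_{\xi^-_0(i)}$, I would combine (i) the convergence in probability of each individual point $\xi^-_\beta(i) \to \xi^-_0(i)$ obtained above, (ii) a tightness/no-escape-of-mass statement saying that for every $\eta > 0$ there are $K$ and $T$ with $\mathbb P(\nu^\beta_\mu([0,T]) \geqslant K) < \eta$ uniformly in small $\beta$ (again via the exponential drift estimates, which make long excursions of $q^+_\mu$ below $c_\mu$ very unlikely so that explosions do not accumulate), and (iii) the a.s.\ finiteness of $\nu^0_\mu$. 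Together these give $\nu^\beta_\mu([a,b]) \to \nu^0_\mu([a,b])$ for continuity points $a<b$ of $\nu^0_\mu$, which is weak convergence of measures. The main obstacle I expect is step two's coupling: controlling the $O(\beta\ln(1/\beta))$ boundary layers uniformly — in particular showing that the diffusion does not linger near the levels $0$ or $c_\mu(t)$ where neither exponential term is yet dominant, and that the accumulated small drift perturbation inside the band does not move the hitting time of $c_\mu$ by more than $o(1)$ — since this is where the nonlinearity of the original equation \eqref{eq_p} and the singular $\beta \to 0$ scaling interact most delicately.
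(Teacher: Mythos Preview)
Your proposal is correct and follows essentially the same architecture as the paper: the three-regime decomposition (fast descent from $+\infty$, approximation by a reflected drifted Brownian motion in the band $[c_\mu(t),0]$, fast explosion after hitting $c_\mu$), the excursion-by-excursion coupling to pass from the first explosion to the $k$-th, and the tightness step are exactly what the paper does in Sections~\ref{sec_6_expl} and~\ref{sec_6_tight}.

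A few minor differences in execution are worth noting. For the middle regime you invoke the Lipschitz continuity of the Skorohod map and transversal crossing of $c_\mu$; the paper instead sandwiches $q^+$ between two explicit reflected diffusions $r_1^+$ and $r^++l_2$ (with $l_2=\beta^{1/6}$) and uses a scale-function computation (Lemma~\ref{lem_an_tube_hitting}) to show that the stationary approximation $\overline q_0$ hits any negative level before it hits $l_2$, which replaces the abstract reflection argument by something more hands-on and simultaneously handles the ``lingering near $0$'' concern you flag at the end. For the boundary layers the paper gets time $<\beta$ for the descent and time $<2\beta^{1/6}$ for the explosion, sharper than your $O(\beta\ln(1/\beta))$ but obtained by the same ODE-comparison idea. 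Finally, your tightness sketch (existence of $K,T$ uniform in small $\beta$) is carried out in the paper via Lemma~\ref{lem_ctrl} (a lower bound on the first explosion time) together with an argument that after a large time $T_\epsilon$ the diffusion $q^+$ is dominated from below by a reflected Brownian motion with drift $a/8$ that never reaches $c_\mu$.
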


\noindent It is immediate to extend this proposition for the joint law of $\nu^\beta_{\mu_i}$ when $\mu_1,\ldots,\mu_k$ are fixed positive numbers. It directly implies the following result on the finite dimensional laws of the point process $\big\lbrace\mu^{\beta}(i),\; i\geqslant 0\big\rbrace$, with $\mu^{\beta}(i)$ as in (\ref{eq_eig_resc}). Let us denote by $M_\beta$ the measure associated to this point process, i.e. $M_\beta := \sum_{i\geqslant 0} \delta_{\mu^{\beta}(i)}$.

\begin{proposition}[Convergence of the finite-marginals of the eigenvalue process] ~\\ \label{prop_cv_eig}
Fix $\mu_1 < \ldots < \mu_k$. When $\beta \to 0$, the random vector 
$\big(M_\beta [\mu_1,\infty[, \ldots,  M_\beta [\mu_k,\infty[\big)$ converges in law to the random vector $\big(M_0 [\mu_1,\infty[, \ldots,  M_0 [\mu_k,\infty[\big)$.
\end{proposition}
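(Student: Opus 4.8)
The plan is to deduce Proposition \ref{prop_cv_eig} from Proposition \ref{prop_cv_expl} together with the equality in law \eqref{eq_RR} of Ramírez and Rider. The starting observation is that, after the rescaling leading to the diffusion $q^\beta_\mu$, the explosions of $q^-_\mu$ are exactly the explosions of the original Riccati diffusion $p^\beta_{\gL^\beta}$ (the explosions of $q^+_\mu$ are the zeros of $\psi$, i.e. the hitting times of $0$ by $p^\beta_{\gL^\beta}$, which do not count as explosions of $p^\beta_{\gL^\beta}$ itself). Hence $\nu^\beta_\mu(\mathbb{R}_+)$, the total number of explosions of $p^\beta_{\gL^\beta}$ on $(0,\infty)$, equals the number of eigenvalues of $\mathfrak{G}^{\beta,a}$ below $\gL^\beta = \exp(-\mu/\beta)$. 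Translating through the rescaling \eqref{eq_eig_resc}, $\Lambda^{\beta,a}(k) < \gL^\beta$ is equivalent to $\mu^\beta(k) = \beta\ln(1/\Lambda^{\beta,a}(k)) > \mu$, so that $\big\{\mu^\beta(k) > \mu\big\} = \big\{p^\beta_{\gL^\beta}\text{ explodes at least }k+1\text{ times}\big\} = \big\{\nu^\beta_\mu(\mathbb{R}_+) \geqslant k+1\big\}$. Consequently $M_\beta[\mu,\infty[ = \#\{k : \mu^\beta(k) \geqslant \mu\} = \nu^\beta_\mu(\mathbb{R}_+)$ almost surely, for every fixed $\mu > 0$; the same identity holds for the limiting objects by the construction of $M_0$ in the preceding proposition.

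With this identity in hand, the scalar case is immediate: for fixed $\mu$, Proposition \ref{prop_cv_expl} gives $\nu^\beta_\mu \Rightarrow \nu^0_\mu$ weakly, and I would like to conclude that the total masses converge in law. This is the one point requiring care, since the total-mass functional is not continuous for the weak topology in general. Here, however, both $\nu^\beta_\mu$ and $\nu^0_\mu$ are almost surely finite discrete measures on $\mathbb{R}_+$ (the number of explosions is finite, by the discussion following \eqref{eq_RR}, and $\nu^0_\mu(\mathbb{R}_+) < \infty$ a.s. since $r^+$ reaches the critical line with probability tending to $0$). I would argue that $\nu^0_\mu$ a.s. puts no mass at $+\infty$ and that the family $\nu^\beta_\mu$ is tight in the number of points, so that along the convergence the total masses are eventually supported in a common compact set; then $\mu \mapsto \mu(\mathbb{R}_+) = \mu(K)$ becomes a bounded continuous functional and $\nu^\beta_\mu(\mathbb{R}_+) \Rightarrow \nu^0_\mu(\mathbb{R}_+)$ follows. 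Equivalently, since these are integer-valued random variables, it suffices to check $\P(\nu^\beta_\mu(\mathbb{R}_+) \leqslant m) \to \P(\nu^0_\mu(\mathbb{R}_+) \leqslant m)$ for each $m$, which follows from weak convergence applied to the open set $\{$measures with mass $\leqslant m$ in a large window$\}$ combined with a uniform bound $\limsup_\beta \P(\nu^\beta_\mu\text{ has a point beyond }T) \to 0$ as $T \to \infty$.

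For the joint statement with $\mu_1 < \cdots < \mu_k$, I would invoke the remark immediately after Proposition \ref{prop_cv_expl}: the convergence $\nu^\beta_{\mu_i} \Rightarrow \nu^0_{\mu_i}$ holds jointly in $i$, because all the diffusions $q^\beta_{\mu_i}$ (and their limits $r_{\mu_i}$) are built from the \emph{same} Brownian motion $W$, exactly as the $p^\beta_\gl$ are all driven by the same $B$. Thus $\big(\nu^\beta_{\mu_1}, \ldots, \nu^\beta_{\mu_k}\big) \Rightarrow \big(\nu^0_{\mu_1}, \ldots, \nu^0_{\mu_k}\big)$ in the product weak topology, and applying the total-mass argument above coordinate by coordinate (the joint tightness in the number of points is automatic from the marginal tightness) yields $\big(\nu^\beta_{\mu_1}(\mathbb{R}_+), \ldots, \nu^\beta_{\mu_k}(\mathbb{R}_+)\big) \Rightarrow \big(\nu^0_{\mu_1}(\mathbb{R}_+), \ldots, \nu^0_{\mu_k}(\mathbb{R}_+)\big)$. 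Rewriting both sides via the identity $M_\beta[\mu_i,\infty[ = \nu^\beta_{\mu_i}(\mathbb{R}_+)$ and $M_0[\mu_i,\infty[ = \nu^0_{\mu_i}(\mathbb{R}_+)$ finishes the proof.

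The main obstacle is the passage from weak convergence of the explosion measures to convergence of their total masses, i.e. controlling the loss of points ``at infinity''. Concretely, one must rule out that a positive fraction of the mass of $\nu^\beta_\mu$ escapes to $+\infty$ as $\beta \to 0$; this requires a uniform-in-$\beta$ estimate on the probability that $q^\beta_\mu$ (equivalently $p^\beta_{\gL^\beta}$) still explodes after a large time $T$, which in turn reflects that, for $r^+$ with drift $a/4 > 0$ reflected at $0$, the probability of ever dipping to the receding critical line $c_\mu(t) = -\mu - t/4$ becomes summable in the number of excursions — so only finitely many explosions occur and none late in time with high probability. I expect this tightness-at-infinity input to be available from (or a minor strengthening of) the estimates developed in Sections \ref{sec_6_expl} and \ref{sec_6_tight}; once granted, the rest is a routine continuous-mapping argument on a compact window.
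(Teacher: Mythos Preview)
Your approach is essentially the same as the paper's: the paper states that Proposition~\ref{prop_cv_eig} ``directly'' follows from the joint version of Proposition~\ref{prop_cv_expl}, via the identification $M_\beta[\mu_i,\infty[ = \nu^\beta_{\mu_i}(\mathbb{R}_+)$ that you spell out. Your extra care about the total-mass functional is unnecessary here: Proposition~\ref{prop_cv_expl} asserts convergence for the \emph{weak} topology of finite measures (not the vague one), and the tightness estimate of Section~\ref{sec_6_tight} is precisely what upgrades the identification of finite-dimensional marginals from Section~\ref{sec_6_expl} to weak convergence --- so the no-mass-at-infinity input you flag as the main obstacle is already packaged into Proposition~\ref{prop_cv_expl}, and $\nu \mapsto \nu(\mathbb{R}_+) = \int 1\,d\nu$ is then a bounded continuous functional.
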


\noindent Using a similar reasoning as in \citep[Proof of Theorem $1$]{TheStochasticAiryOperatorAtLargeTemperature}, we consider the space of measures on $]0,\infty[$ with the topology that makes continuous the maps $m \mapsto \langle f, m \rangle$ for any continuous and bounded function $f$ with support bounded to the left. In other words, this is the vague
topology towards $0$ and the weak topology towards $\infty$. It allows to control the decreasing sequence of atom locations of $M_\beta$ from its first point.

\noindent Proposition \ref{prop_cv_eig} shows that the family of measures $(M_\beta)_{\beta >0}$ is tight (and therefore relatively compact by Prokhorov's theorem): indeed, the above convergence provides the required control on the mass given by $M_\beta$ to $[\mu, \infty[$ for any given $\mu>0$. Besides, this convergence uniquely identifies the finite-marginals of any limiting point. We thus deduce from Proposition \ref{prop_cv_eig} the convergence of the left-vague/right-weak topology of the eigenvalues point process stated in Theorem \ref{thm_main}.

\medskip

\noindent To conclude the proof of Theorem \ref{thm_main}, we devote the rest of the paper to the proof of Proposition \ref{prop_cv_expl}, also using the unique identification of the finite-marginals and the tightness of the family of measures. In Section \ref{sec_6_expl}, we control the first explosion time of the diffusion $q_\mu^\beta$ in (\ref{eq_an}) and deduce the weak convergence of its $k$ first explosions times. Then, in Section \ref{sec_6_tight}, we show the tightness of the family of measures $(\nu^\beta_\mu)_{\beta >0}$. 

\medskip

\noindent Unless specified otherwise, the limits and $o$ now pertain to the asymptotics $\beta \rightarrow 0$. For lighter notations, we also omit the dependency on $\beta$ of our variables.  

\subsection{Useful concepts and results}

We will use the following estimates for any Brownian motion $B$:
\begin{gather}\label{eq_Browniantail}
\forall x \geqslant 0, \ \mathbb{P}\Big(\underset{s \in [0,1]}\sup B(s) >x \Big) \leqslant \mathbb{P}\Big(\underset{s \in [0,1]}\sup \big\lvert B(s) \big\rvert >x \Big)\leqslant 4e^{-x^2/2}, \\ \label{eq_Browniantail2}
\forall x \geqslant 0, \ \mathbb{P}\Big(\underset{s \in [0,1]}\sup B(s) >x \Big) = \mathbb{P}\Big(\big\lvert B(1) \big\rvert >x \Big)\geqslant 1-\sqrt{\frac{2}{\pi}}x.
\end{gather} 

\noindent Consider a diffusion $y$ started from $0$ and the same diffusion $\overline{y}$ reflected downwards at the origin:
\begin{equation*}
\overline{y}(t) = y(t)-\ \underset{s \leqslant t}\sup \ y(s).
\end{equation*}
For all $\delta>0$, $\big\lbrace \overline{y}(t)<-\delta \big\rbrace = \big\lbrace \exists s<t, \ y(t)-y(s)<-\delta \big\rbrace$, therefore:
\begin{equation}\label{eq_drawdown}
\underset{s \in [0,t]}\sup\big\lvert y(s) \big\rvert < \delta/2 \Rightarrow \underset{s \in [0,t]}\inf \overline{y}(s) >-\delta.
\end{equation}

%

\section{Control of the explosion times}\label{sec_6_expl}

In this section, we fix $\mu >0$. Recall the definition of the critical line $c = c_\mu$ in \eqref{criticalline}. Consider the first two explosion times $\xi^+ := \xi_\beta^+(0)$ and $\xi^- := \xi_\beta^-(0)$ of the diffusion $q_\mu^\beta$. Until the first explosion time $\xi^+$, by definition $q_\mu^\beta(0) = \infty$ and $q_\mu^\beta(t) = q^+(t)$ follows the SDE \eqref{SDEq+}.

\noindent Set $\delta=\beta^{1/8}$ and introduce the first hitting times by the diffusion $q^+$:
\begin{equation*}
\tau_0 := \inf\big\{t\geqslant 0,\; q^+(t) \leqslant 0\big\} \text{ and }\tau_c :=  \inf\big\{t\geqslant 0,\; q^+(t) \leqslant c(t)+\delta\big\}.
\end{equation*}

\noindent We decompose the trajectory of $q^+$ into three parts. First, it reaches the axis $x= 0$ in a short time (descent from $\infty$). Then, it spends a time of order $O(1)$ in the region $[c(t)+\delta,0]$ and behaves like $r^+$, the reflected Brownian motion with drift $a/4$ from \eqref{eq_r+}. Finally, if it approaches the critical line $t \mapsto c(t)$ closer than $\delta$, then it explodes with high probability within a short time (explosion to $-\infty$).

\noindent Recall the first hitting times $\xi_0^+(0)<\xi_0^-(0)$ of the critical line $c$ by the diffusion $r_\mu$ from Definition \ref{def_0_limdif}.

\begin{proposition}[Limit behavior of the diffusions $q^+$ and $q^-$]\label{prop_an}~\\
Set $T>0$, independent of $\beta$. There exist a deterministic function $\eta(\beta)\rightarrow 0$ and an event $\mathcal{E}_0$, $\mathbb{P}(\mathcal{E}_0)\rightarrow 1$, on which, for $\beta$ small enough:
\begin{enumerate}[label=(\alph*)]
	\item $\tau_0 < \beta$, 
	\item $\underset{[\tau_0, \tau_c\wedge T]}\sup\big\lvert q^+(t)-r^+(t)\big\rvert <\delta$, 
	\item $\tau_c<T \Rightarrow \lvert\xi^+-\tau_c\rvert<\eta$ and $\lvert\xi_0^+(0)-\tau_c\rvert<\eta$.
\end{enumerate}

\noindent Properties (a), (b) and (c) also hold for the diffusions $q^-$ from \eqref{SDEq-} and $r^-$ from \eqref{eq_r-}, with their corresponding hitting and explosion times. As a consequence, on the event $\mathcal{E}_0$, for $\beta$ small enough,
\begin{equation}\label{eq_an}
\xi^-<T \Rightarrow \big\lvert \xi^- -\xi_0^-(0)\big\rvert <2\eta.
\end{equation}
\end{proposition}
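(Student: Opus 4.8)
The plan is to analyze the trajectory of $q^+$ (and symmetrically $q^-$) in the three regimes described after the proposition, quantifying each with a high-probability event whose intersection will be $\mathcal{E}_0$. For the \emph{descent from $\infty$} (part (a)), I would fix a large constant $K$ and compare $q^+$ on $[0,\beta]$ with the solution of the SDE where only the strongly negative drift term $-\tfrac14\exp(q^+/\beta)$ is kept: while $q^+(t)\geqslant K$ this term dominates every other drift contribution as well as the Brownian fluctuation $\sup_{[0,\beta]}|W|$, which by \eqref{eq_Browniantail} is $o(1)$ with overwhelming probability on a time interval of length $\beta$. A Gronwall/comparison argument then forces $q^+$ below $K$ (and in fact below $0$, using that the drift stays negative above the critical line) in time $o(\beta)$; absorbing $K$ into the error gives $\tau_0<\beta$ on an event of probability tending to $1$.

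For the \emph{reflected–Brownian regime} (part (b)), on the interval $[\tau_0,\tau_c\wedge T]$ the diffusion $q^+$ stays in the strip $[c(t)+\delta,0]$ up to small excursions. There the two exponential drift terms are each $O(\exp(-\delta/\beta))$ (since $q^+\leqslant 0$ makes $\exp(q^+/\beta)\leqslant 1$, and more precisely these terms only matter near the boundaries, where I would introduce auxiliary stopping times at levels $-\delta/2$ and $c(t)+\delta/2$ to show they act as a soft reflection), so $q^+$ is well approximated by $W(t)+at/4$ reflected downwards at $0$, which is exactly $r^+$. Quantitatively I would run a Skorohod-map stability estimate: the Skorohod reflection map is Lipschitz for the sup norm, so $\sup|q^+-r^+|$ is controlled by the sup norm of the difference of the ``driving'' paths, namely the accumulated non-Brownian, non-reflection drift, which is $\lesssim T\exp(-\delta/2\beta)+(\text{local-time-type error near }0)$; choosing $\delta=\beta^{1/8}$ makes this $o(\delta)$. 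One subtlety is that $q^+$ is reflected at $0$ only ``softly'' (via a large drift, not a genuine local time), so I would compare through an intermediate genuinely reflected process and bound the overshoot of the soft reflection by $O(\beta\log(1/\beta))$ using the same exponential-drift argument as in part (a).

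For the \emph{explosion after approaching the critical line} (part (c)): on $\{\tau_c<T\}$, at time $\tau_c$ we have $q^+(\tau_c)=c(\tau_c)+\delta$, and below the critical line the term $-\tfrac14\exp(-(q^++t/4+\mu)/\beta)=-\tfrac14\exp(-(q^+-c(t))/\beta)$ becomes a huge negative drift as soon as $q^+-c(t)<\delta$, of size at least $\exp(-\delta/\beta)$... wait, that is small; rather, once $q^+$ dips even slightly \emph{below} $c(t)$ the argument $-(q^+-c(t))/\beta$ becomes positive and the term blows up, driving $q^+$ to $-\infty$ in time $o(1)$. I would show that from level $c(\tau_c)+\delta$ the Brownian fluctuation carries $q^+$ below $c(t)$ within time $O(\delta^2\log(1/\delta))$ with probability $\to 1$ (by \eqref{eq_Browniantail2}), and then the runaway drift forces explosion within a further $o(1)$ time; hence $|\xi^+-\tau_c|<\eta$ for a suitable $\eta(\beta)\to 0$. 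The same estimates applied to the \emph{limiting} diffusion $r_\mu$ (which has no exponential barrier but hits $c$ exactly, restarting at $0$) give $|\xi_0^+(0)-\tau_c|<\eta$, using the part-(b) bound $\sup|q^+-r^+|<\delta$ to transfer the hitting time of $c+\delta$ by $q^+$ to the hitting time of $c$ by $r^+$ up to an error controlled by the (bounded, nondegenerate) local behavior of $r^+$ near $c$. The symmetric statements for $q^-,r^-$ follow verbatim, noting $a>0$ guarantees $r^-$ actually reaches $c$. Finally, \eqref{eq_an} is assembled by concatenation: on $\mathcal{E}_0$, if $\xi^-<T$ then $\xi^+<T$ too, so by (c) $\xi^+$ is within $\eta$ of $\xi_0^+(0)$; restarting both $q^-$ and $r^-$ from $\infty$ resp.\ $0$ at nearly the same time and applying (a)--(c) to the $q^-$ branch over the remaining interval (whose length is still $\leqslant T$) yields $|\xi^--\xi_0^-(0)|<2\eta$ after adding the two $\eta$-errors, with a short argument (again the descent estimate (a), now in time $o(\beta)$) absorbing the discrepancy in restart times.

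\textbf{Main obstacle.} The delicate point is part (b): controlling the soft reflection at $0$ and the near-critical-line behavior simultaneously while keeping the error below $\delta=\beta^{1/8}$. The exponential drift terms are not uniformly small — they are negligible in the bulk of the strip but become the dominant effect near the boundaries — so the comparison with the genuinely reflected Skorohod process must be carried out carefully, tracking how much ``extra push'' the soft barrier provides versus an exact reflection, and showing this discrepancy (of order $\beta\log(1/\beta)$ per excursion, times the number of excursions, which is $O_P(1/\beta)$ on $[0,T]$... so one must instead bound it pathwise via a single Gronwall estimate on the difference process rather than summing over excursions) stays $o(\delta)$. I expect this to require a somewhat technical pathwise comparison lemma, isolating the region $\{q^+>-\delta/2\}$ and using that there the soft-reflection drift $-\tfrac14\exp(q^+/\beta)\leqslant -\tfrac14\exp(-\delta/2\beta)$ is the mechanism preventing $q^+$ from rising much above $0$, quantified against the Brownian scale.
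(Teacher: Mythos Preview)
Your three-phase decomposition matches the paper's, and the arguments you sketch for (a) and (c) are essentially those used there: an ODE comparison driven by the dominant exponential term for the fast descent and the fast explosion, together with a Brownian-fluctuation estimate (via \eqref{eq_Browniantail2}) to cross the critical line from distance $\delta$.

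The genuine difference is in (b), and the obstacle you flag at the end is real. Controlling the soft reflection at $0$ through a Skorohod--Lipschitz estimate runs into exactly the excursion-accumulation problem you describe, and the Gronwall alternative you gesture at is not clearly workable because the difference $q^+-r^+$ does not satisfy a closed differential inequality (the reflecting term in $r^+$ is a singular local-time push). The paper sidesteps this with a different idea. It first passes to the stationary diffusion $\overline{q}_0$ obtained by dropping the lower exponential (cost $O(Te^{-\delta/\beta})=o(\delta)$), and then sandwiches it. The lower bound $\overline{q}_0\geqslant r^+ - 2l_2$, with $l_2=\beta^{1/6}$, comes from a direct pathwise drift comparison with a genuinely reflected process at level $-l_2$. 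The upper bound is the step you are missing: the paper computes the \emph{scale function} $\mathfrak{s}_\beta$ of $\overline{q}_0$ and shows that $\mathfrak{s}_\beta(l_2)\to\infty$ while $\mathfrak{s}_\beta$ stays bounded on $(-\infty,0]$, so that $\overline{q}_0$ hits any fixed negative level before reaching $l_2$ with probability tending to $1$ (this is the content of Lemma~\ref{lem_an_tube_hitting}). On that event $\overline{q}_0\leqslant l_2$ throughout $[0,\tau'_c\wedge T]$; since $\overline{q}_0 = W + at/4 - (\text{nondecreasing process})$, Skorohod minimality then gives $\overline{q}_0\leqslant r^+ + l_2$ in one stroke. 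Thus the paper never analyses the soft reflection excursion by excursion: a single global a~priori bound from the scale function replaces the delicate pathwise control you were attempting, and the ``main obstacle'' you identify simply does not arise.
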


\noindent The control (\ref{eq_an}) extends to any $\xi_\beta^-(k)$ and $\xi_0^{-}(k)$ for $k \in \mathbb{N}$ and ensures that, for any $T>0$, $\mathbb{P}(\xi_\beta^-(k)\leqslant T) \rightarrow \mathbb{P}(\xi_0^-(k)\leqslant T)$, thus identifying the measure $\nu_\mu^0$ as the unique possible limit for $\nu_\mu^\beta$. 

\medskip

\noindent The rest of this section is dedicated to the proof of Proposition \ref{prop_an}. Recall that the diffusion $q^-$ differs from its counterpart $q^+$ only by its constant drift component $-(a+1)/4$ (instead of $+a/4$ for $q^+$), which makes $q^-$ decrease faster than $q^+$. We prove the results for the diffusion $q^+$, they extend to the diffusion $q^-$ with the same arguments.

\medskip
 
\noindent We introduce the stationary diffusion $\overline{q}$ on $\mathbb{R}_+$, which we use to approximate $q^+$ in the region where the drift component $\exp\big[-\frac{1}{\beta}\big(q^+(t)-c(t)\big)\big]$ becomes negligible as $\beta$ tends to $0$:
\begin{equation}\label{eq_an_stat}
\mathrm{d} \overline{q}(t)= \mathrm{d} W(t) +\frac{1}{4}\big(a-e^{\overline{q}(t)/\beta}\big)\mathrm{d} t.\\
\end{equation}

\subsection{Descent from $\infty$: proof of $(a)$}

It suffices to prove property $(a)$ for the diffusion $\overline{q}$, which bounds the diffusion $q^+$ from above. Set the level $l_1 := \beta^{3/4}$, so that $\beta=o(l_1)$. As $\beta$ tends to $0$, when the diffusion $\overline{q}$ is above the level $l_1$, the term of leading order in the right-hand side of (\ref{eq_an_stat}) is $e^{\overline{q}(t)/\beta}$.  

\medskip

\noindent Consider the ordinary differential equation  on $\mathbb{R}_+$:
\begin{equation*}\label{eq_an_descent_det}
\mathrm{d} y(t) := \frac{1}{4}(a-e^{y(t)/(2\beta)})\mathrm{d} t, \ y(0)=\infty,
\end{equation*}
which has for solution $y(t) = -2\beta \ln\big(\frac{1}{a}(1-e^{-at/(8\beta)})\big)$. The time $t_1$ at which $y$ reaches the level $l_1/2$ has the asymptotics
\begin{equation*}\label{eq_an_descent_det_time}
t_1 = 8\beta e^{-l_1/(4\beta)} + o(\beta e^{-l_1/(4\beta)}).
\end{equation*}
Introduce the diffusion $\overline{q}_1(t):= \overline{q}(t)-W(t)$. Its evolution writes:
\begin{equation*}
\mathrm{d} \overline{q}_1(t)=\frac{1}{4}\Big(a-e^{\big(\overline{q}_1(t)+W(t)\big)/\beta}\Big)\mathrm{d} t.
\end{equation*}
Let $\mathcal{E}'_1:=\big\lbrace \underset{[0,t_1]}\sup \big\lvert W(t) \big\rvert \leqslant \beta^2\big\rbrace$. By the Brownian tail bound (\ref{eq_Browniantail}), $\mathbb{P}(\mathcal{E}'_1) \longrightarrow 1$. On the event $\mathcal{E}'_1$ and while $\overline{q}_1(t) \geqslant l_1/2$, we have 
\begin{equation*}
\left| \frac{W(t)}{\overline{q}_1(t)} \right| \leqslant \frac{\beta^2}{l_1/2}\,, \ \text{  i.e } \ \left| \frac{W(t)}{\overline{q}_1(t)} \right| \leqslant 2\beta^{5/4}
\end{equation*}
and thus $\overline{q}_1(t) + W(t) \geqslant \overline{q}_1(t)/2$ for $\beta$ small enough.

\noindent Therefore the diffusion $\overline{q}_1$ is bounded from above by $y$ for $\beta$ small enough and hits the level $l_1/2$ before time $t_1$. Since $\big\lvert \overline{q}_1(t)-\overline{q}(t) \big\rvert \leqslant \beta^2$ before time $t_1$, for $\beta$ small enough, the diffusion $\overline{q}$ hits the level $l_1$ before time $t_1$ .

\medskip

\noindent After the level $l_1$ is reached, we use the Brownian motion $W(t_1+\cdot)-W(t_1)$ to reach $x=0$ in a short additional time. Set the event 
\begin{equation*}
\mathcal{E}^{''}_1 := \Big\lbrace \underset{[0,\beta/2]}\inf\Big\lbrace W(t_1+t)-W(t_1)+\frac{a}{4}t \Big\rbrace<-l_1 \Big\rbrace, 
\end{equation*}
on which $\tau_0<t_1+\beta/2$. Since $\mathbb{P}\big(\mathcal{E}^{''}_1\big)\geqslant \mathbb{P}\Big(\underset{[0,\beta/2]}\inf W(t_1+t)-W(t)<-l_1-\frac{a}{8}\beta\Big)$, the lower bound (\ref{eq_Browniantail2}) and the asymptotics $\beta=o(l_1)$ and $l_1=o(\sqrt{\beta})$ imply that $\mathbb{P}\big(\mathcal{E}^{''}_1\big) \longrightarrow 1$, thus proving the property $(a)$ on the event $\mathcal{E}_1=\mathcal{E}'_1 \cap \mathcal{E}^{''}_1$.

\subsection{Convergence to $r^+$: proof of $(b)$}

The bound (\ref{eq_Browniantail}) shows that the probability of the following event tends to $1$ as $\beta$ tends to $0$:
\begin{equation*}
\mathcal{E}'_2=\big\lbrace \underset{t\in [0,\beta]}\sup\lvert W(t)\rvert \leqslant \beta^{1/4} \big\rbrace.
\end{equation*}
Recall that $\delta = \beta^{1/8}$ so $\beta^{1/4}=o(\delta)$, and note that the diffusion $r^+(\tau_0+t)-r^+(\tau_0)$ is equal in law to the diffusion $r^+$ (as in (\ref{eq_r+})), by the strong Markov property. Thus, to prove property $(b)$, it suffices to show that, with overwhelming probability as $\beta$ tends to $0$, for $\beta$ small enough,
\begin{equation} \label{eq_an_tube}
\underset{[0, \tau'_c\wedge T]}\sup\big\lvert q_0^+(t)-r^+(t)\big\rvert <\delta/2,
\end{equation}
where $q_0^+$ denotes the diffusion $q^+$ started from $x=0$ at time $t=0$ and $\tau'_c$ its first hitting time of $c(t)+\delta$. We write $\overline{q}_0$ the stationary diffusion $\overline{q}$ from (\ref{eq_an_stat}) started from $x=0$ at time $t=0$. For $t \in [0,\tau'_c \wedge T]$, $q_0^+(t) \geqslant c(t)+\delta$, so we have the bounds:
\begin{equation*}
\forall t \in [0,\tau'_c \wedge T], \ \overline{q}_0(t)-e^{-\delta/\beta}T\leqslant q_0^+(t) \leqslant \overline{q}_0(t).
\end{equation*}
Since $e^{-\delta/\beta}=o\big(\delta\big)$, to prove property (\ref{eq_an_tube}), it is enough to show that, on an event $\mathcal{E}'_2$ of probability going to $1$ as $\beta$ tends to $0$, for $\beta$ small enough,
\begin{equation}\label{eq_an_tube_stat}
\underset{[0, \tau'_c\wedge T]}\sup\big\lvert \overline{q}_0(t)-r^+(t)\big\rvert <\delta/4.
\end{equation}
To that end, we bound the diffusion $\overline{q}_0(t)$ from below and above by two reflected diffusions $r^+_1$ and $r^+_2$ that converge to $r^+$ as $\beta$ tends to $0$. 

\medskip

\noindent We set the level $l_2:=\beta^{1/6}$, so that $l_2=o(\delta) \text{ and } \delta =o(\sqrt{l_2})$.

\subsubsection{Lower bound}


We set the level $l_2:=\beta^{1/6}$.  Let $r_1^+$ be the following diffusion, reflected downwards at the barrier $-l_2$:
\begin{equation*}
r_1^+(t) := -l_2 + W(t)+at/4 -e^{-\frac{l_2}{\beta}}T/4 - \sup_{s \leqslant t} \big(W(s)+as/4 -e^{-\frac{l_2}{\beta}}T/4\big)\vee 0.
\end{equation*}
Since the element of drift  $-e^{\overline{q}_0(t)/\beta}$ decreases when $\overline{q}_0(t)$ goes through  $\big]-\infty,-l_2\big]$, we have the lower bound:
\begin{equation}\label{eq_an_tube_low}
\forall t \in [0,\tau'_c \wedge T], \ \overline{q}_0(t)\geqslant r_1^+(t).
\end{equation}
It is straightforward that
\begin{equation*}
\forall t \in [0,\tau'_c \wedge T], \ r^+(t)-e^{-\frac{l_2}{\beta}}T/4-l_2 \leqslant r_1^+(t)\leqslant r^+(t)-l_2.
\end{equation*}
Since $e^{-\frac{l_2}{\beta}} = o(l_2)$, for $\beta$ small enough,
\begin{equation*}
\underset{[0, \tau'_c\wedge T]}\sup\big\lvert r^+(t)-r_1^+(t)\big\rvert <2l_2.
\end{equation*}

\subsubsection{Upper bound}

We wish to bound the diffusion $\overline{q}_0(t)$ from above by the diffusion $r^+(t) + l_2$. To prove that this upper bound holds with high probability as $\beta$ tends to $0$, we use the following result, that shows how unlikely it becomes for the diffusion $\overline{q}_0(t)$ to hit the level $l_2$ before any negative level.

\begin{lemma}[Levels hitting times for the diffusion $\overline{q}_0(t)$] \label{lem_an_tube_hitting}~\\
For any $\gamma<0$,
\begin{equation*}
\mathbb{P}\Big(\inf\big\lbrace t \geqslant 0, \ \overline{q}_0(t)=l_2 \big\rbrace<\inf\big\lbrace t \geqslant 0, \ \overline{q}_0(t)=\gamma \big\rbrace\Big) \longrightarrow 0.
\end{equation*}
\end{lemma}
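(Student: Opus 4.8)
The plan is to analyze the one-dimensional diffusion $\overline{q}_0$ started at $0$ and show that, as $\beta \to 0$, its scale function makes the event of reaching the positive level $l_2=\beta^{1/6}$ before the negative level $\gamma$ asymptotically negligible. Recall that $\overline{q}_0$ solves $\mathrm{d}\overline{q}_0 = \mathrm{d}W + \tfrac14(a - e^{\overline{q}_0/\beta})\mathrm{d}t$, so its drift $b(x) = \tfrac14(a - e^{x/\beta})$ is strongly negative for $x$ of order $1$ and above, while it is roughly the constant $a/4$ for $x$ negative and bounded away from $0$. The key mechanism is that the scale function $\mathfrak{s}$ with $\mathfrak{s}'(x) = \exp\big(-2\int_0^x b(u)\,\mathrm{d}u\big) = \exp\big(-\tfrac{a}{2}x + 2\beta(e^{x/\beta}-1)\big)$ grows double-exponentially as $x$ increases through positive values, so $\mathfrak{s}(l_2) - \mathfrak{s}(0)$ dwarfs $\mathfrak{s}(0) - \mathfrak{s}(\gamma)$.

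Concretely, I would first write down the exact hitting-probability formula for the diffusion started at $0$: for $\gamma < 0 < l_2$,
\begin{equation*}
\mathbb{P}\big(\sigma_{l_2} < \sigma_\gamma\big) = \frac{\mathfrak{s}(0) - \mathfrak{s}(\gamma)}{\mathfrak{s}(l_2) - \mathfrak{s}(\gamma)},
\end{equation*}
where $\sigma_x$ denotes the first hitting time of level $x$ and $\mathfrak{s}$ is the scale function above (this is the standard formula for one-dimensional regular diffusions, valid since $\overline{q}_0$ does not explode before these hitting times — the strong negative drift only helps). Then the estimate reduces to showing that $\big(\mathfrak{s}(0)-\mathfrak{s}(\gamma)\big)/\big(\mathfrak{s}(l_2)-\mathfrak{s}(\gamma)\big) \to 0$. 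Since $\gamma$ is fixed and negative, $\mathfrak{s}(0) - \mathfrak{s}(\gamma) = \int_\gamma^0 \exp\big(-\tfrac{a}{2}u + 2\beta(e^{u/\beta}-1)\big)\mathrm{d}u$ stays bounded (the $2\beta(e^{u/\beta}-1)$ term is $O(\beta)$ uniformly on $[\gamma,0]$, so the integrand converges to $e^{-au/2}$ and the integral converges to $\tfrac{2}{a}(e^{-a\gamma/2}-1)$, a finite constant). Meanwhile $\mathfrak{s}(l_2) - \mathfrak{s}(0) = \int_0^{l_2}\exp\big(-\tfrac{a}{2}u + 2\beta(e^{u/\beta}-1)\big)\mathrm{d}u \geqslant \int_0^{l_2}\exp\big(2\beta(e^{u/\beta}-1) - \tfrac{a}{2}l_2\big)\mathrm{d}u$; restricting the integral to $u$ near $l_2$, say $u\in[l_2/2,l_2]$, gives a lower bound of order $\tfrac{l_2}{2}\exp\big(2\beta(e^{l_2/(2\beta)}-1)\big)$, which blows up faster than any polynomial in $1/\beta$ because $l_2/(2\beta) = \tfrac12\beta^{-5/6}\to\infty$, so $e^{l_2/(2\beta)}$ is super-polynomially large and multiplying by $\beta$ still diverges. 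Hence the ratio tends to $0$.

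The main obstacle — really the only delicate point — is justifying the hitting-probability identity and the non-explosion of $\overline{q}_0$ on the relevant time interval, since $\overline{q}_0$ is a diffusion on all of $\mathbb{R}$ whose drift is only locally Lipschitz and unbounded below near $+\infty$. This is handled by noting that the strong drift toward $-\infty$ at $+\infty$ prevents explosion upward (Feller's test, or simply a comparison argument), and that the process is regular on any bounded interval $[\gamma - 1, l_2 + 1]$ so the scale-function formula applies verbatim on that interval; the hitting times $\sigma_{l_2}$ and $\sigma_\gamma$ are a.s. finite on the event we care about. I would also remark that one only needs the qualitative fact that $\mathfrak{s}'$ grows super-polynomially past $0$, not a sharp asymptotic, so the argument is robust to the precise choice of $l_2 = \beta^{1/6}$; any level $l_2$ with $\beta = o(l_2)$ works.
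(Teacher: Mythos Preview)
Your proposal is correct and follows essentially the same route as the paper: compute the scale function of $\overline{q}_0$, apply the standard one-dimensional hitting-probability formula $\mathbb{P}(\sigma_{l_2}<\sigma_\gamma)=\big(\mathfrak{s}(0)-\mathfrak{s}(\gamma)\big)/\big(\mathfrak{s}(l_2)-\mathfrak{s}(\gamma)\big)$, and show that the numerator stays bounded while the denominator diverges. The paper packages the asymptotics into a separate sub-lemma (uniform convergence of $\mathfrak{s}_\beta$ on $[\gamma,0]$ to the scale function of the drifted Brownian motion, together with $\mathfrak{s}_\beta(l_2)\to\infty$ obtained by bounding $\mathfrak{s}_\beta'$ below on $[l_1,l_2]$ with $l_1=\beta^{3/4}$), whereas you estimate the two integrals directly; the substance is identical. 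One minor slip: your exponent should read $-\tfrac{a}{2}x+\tfrac{\beta}{2}(e^{x/\beta}-1)$ rather than $2\beta(e^{x/\beta}-1)$, but this constant does not affect the argument.
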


\noindent Lemma \ref{lem_an_tube_hitting} is proved in the Appendix using standard tools of diffusion analysis.

\medskip

\noindent The choice of level $\gamma=-c(T)$ in Lemma \ref{lem_an_tube_hitting} provides the existence of an event $\mathcal{E}'_2$ of probability going to $1$ as $\beta$ tends to $0$ on which the diffusion $\overline{q}_0(t)$ hits the barrier $c(t)$ before the level $l_2$, and thus:
\begin{equation} \label{eq_an_tube_up}
\forall t \ \in [0, \tau'_c\wedge T], \ \overline{q}_0(t)\leqslant r^+(t)+l_2.
\end{equation}

\subsubsection{Conclusion}

\noindent Gathering (\ref{eq_an_tube_low}) and (\ref{eq_an_tube_up}), we get that, on $\mathcal{E}'_2$, for $\beta$ small enough,
\begin{equation*}
\forall t \ \in [0, \tau'_c\wedge T], r_1^+(t) \leqslant \overline{q}_0(t) \leqslant r^+(t)+l_2,
\end{equation*}
which implies $\underset{[0, \tau'_c\wedge T]}\sup\big\lvert \overline{q}_0(t) -r^+(t)\big\rvert < 2l_2$.
This in turn implies (\ref{eq_an_tube_stat}) and thus proves property $(b)$.

\subsection{Explosion to $-\infty$: proof of $(c)$}

We denote by $q_{+\delta}^+$ (resp. $q_{-\delta}^+$) the diffusion $q^+$ started at time $t=0$ from position $-\mu+\delta$ (resp. $-\mu-\delta$). We introduce the first hitting time $\tau_\delta$ of the level $c(t)-\delta$ by the diffusion $q_{+\delta}^+$, and the explosion time $\tau_{-\infty}$ of the diffusion $q_{-\delta}^+$ to $-\infty$.

\noindent Recall that $l_2=\beta^{1/6}$. To prove property $(c)$, we choose $\eta:=2l_2$ and show that there exists an event of probability going to $1$ as $\beta$ tends to $0$ on which, for $\beta$ small enough, $\tau_\delta<\eta/2$ and $\tau_{-\infty}<\eta/2$.

\subsubsection{Control of $\tau_\delta$} 

Recall that $\delta = \beta^{1/8}$, so that $l_2=o(\delta)$ and $\delta=o(\sqrt{l_2})$.

\noindent We use the variations of the Brownian motion $W$ to cross the critical line $c(t)$. The upper bound $q_{\delta}^+(t)\leqslant -\mu +\delta+W(t)+\frac{a}{4}t$ implies that $\tau_\delta < l_2$ on the event
\begin{equation*}
\mathcal{E}_3 := \bigg\lbrace \underset{[0,l_2]}\inf\Big\lbrace W(s)+\frac{a}{4}s + \frac{1}{4}s +2\delta \Big\rbrace<0 \bigg\rbrace,
\end{equation*}
and the Brownian tail bound from (\ref{eq_Browniantail2}) shows that $\mathbb{P}\big(\mathcal{E}_3\big) \rightarrow 1$.

\medskip

\noindent Note that the inclusion of events $\mathcal{E}'_3 \subset \Big\lbrace \inf\big\lbrace t \geqslant 0, \ r^+(t)+t/4\leqslant-2\delta \big\rbrace< l_2\Big\rbrace$ ensures that, on a subevent of $\mathcal{E}_2$ (where $\lvert q^+(\tau_c)-r^+(\tau_c)\rvert<\delta$ if $\tau_c<\infty$) of probability going to $\mathbb{P}(\mathcal{E}_2)$ as $\beta$ tends to $0$, the diffusion $r^+$ hits the critical line $c(t)$ while the diffusion $q^+$ crosses this line, between times $\tau_c$ and $\tau_c+\eta$.

\subsubsection{Control of $\tau_{-\infty}$}

On each Brownian trajectory, the diffusion $q_{-\delta}^+$ is bounded from above by the diffusion $z$, with
\begin{equation*}
\mathrm{d} z(t):= \mathrm{d} W(t) +\frac{1}{4}\big(a-e^{-\frac{1}{\beta}\big(t/4+\mu+z(t)\big)}\big)\mathrm{d} t, \ z(0)=-\mu-\delta.
\end{equation*}
Define the diffusion $z_1(t)=z(t)+\mu-W(t)-at/4$, with evolution
\begin{equation*}
\mathrm{d} z_1(t):= -\frac{1}{4}e^{-\frac{1}{\beta}\big(z_1(t)+t/4+W(t)+at/4\big)}\mathrm{d} t, \ z_1(0)=-\delta.
\end{equation*}
Consider the event $\mathcal{E}_3^{'}:= \big\lbrace \underset{[0,\beta]}\sup \ \lvert W(t)\rvert \leqslant \delta/4 \big\rbrace$, with $\mathbb{P}\big(\mathcal{E}_3^{'} \big)\rightarrow 1$. 

\noindent On $\mathcal{E}_3^{'}$, for $\beta$ small enough:
\begin{equation*}
\forall t\leqslant \beta,\ W(t)+\frac{a+1}{4}t \leqslant \delta/2,
\end{equation*}
so the diffusion $z_1$ is bounded from above by the solution $z_2$ of the ordinary differential equation
\begin{equation*}
\mathrm{d} z_2(t):= -\frac{1}{4}e^{-\frac{1}{\beta}\big(z_2(t)+\delta/2\big)}\mathrm{d} t, \ z_2(0):=-\delta
\end{equation*}
with solution
\begin{equation*}
z_2(t)=-\delta/2+\beta\ln\big(e^{-\delta/(2\beta)}-\frac{t}{4\beta}\big),
\end{equation*}
which explodes to $-\infty$ in a time $4\beta e^{-\delta/(2\beta)}$, smaller than $\beta$ for $\beta$ small enough. This remains true for $z_1$ and thus for $z$, since $\lvert z_1-z\rvert \leqslant\mu+a\beta/4+\beta$ while $t\leqslant \beta$. Since $\beta =o(\eta)$, this proves the desired control on $\tau_{-\infty}$.

\section{Tightness of the explosion times measures}\label{sec_6_tight}

\noindent In this section, we fix $\mu >0$. Recall the measure of the explosion times $\nu_\mu^\beta$ from \eqref{explosionsq-} . We prove in this section that there are $\beta_0>0$ and $\alpha>0$, such that, for all $\epsilon>0$, there exist a finite time $T_\epsilon$ and a finite number of explosions $N_\epsilon$ so that:
\begin{equation}\label{eq_tight_main0}
\underset{\beta \leqslant \beta_0}\inf \mathbb{P}\bigg(\Big\lbrace\nu_\mu^\beta\big([0,\alpha T_\epsilon]\big)\leqslant N_\epsilon \Big\rbrace \bigcap \Big\lbrace \nu_\mu^\beta\big([\alpha T_\epsilon,\infty[\big)=0\Big\rbrace\bigg)>1-\epsilon.
 \end{equation}

\noindent Introduce $\mathcal{L_\mu^\beta}$, the law of the random measure $\nu_\mu^\beta$. The bound (\ref{eq_tight_main0}) gives us the tightness condition:
\begin{equation}\label{eq_tight_compact}
\exists \beta_0, \ \forall \epsilon>0, \ \exists K_\epsilon \text{ compact}, \ \underset{\beta<\beta_0}\sup \ \mathcal{L_\mu^\beta}(K_\epsilon)\geqslant 1-\epsilon.  
\end{equation}

\noindent  Indeed, the set $\mathring{K}_\epsilon :=\big\lbrace \mu \in \mathcal{P}, \ \mu([0,\alpha T_\epsilon]) \leqslant N_\epsilon \text{ and } \mu\big([\alpha T_\epsilon,\infty[\big)=0\big\rbrace$, where $\mathcal{P}$ is the space of locally finite measures on $\mathbb{R}_+$, satisfies the following conditions:
\begin{equation}\label{eq_tightcompact2}
\begin{split}
\underset{\mu \in \mathring{K}_\epsilon}\sup \mu(\mathbb{R_+})<\infty, \\
\underset{t>0}\inf \ \underset{\mu \in \mathring{K}_\epsilon}\sup \mu\big([t,\infty[\big)=0,
\end{split}
\end{equation}

\noindent where the second condition holds since, for any measure $\mu$ in $\mathring{K}_\epsilon$ and $t\geqslant \alpha T_\epsilon, \ \mu \big([t,\infty[\big)=0$. It thus fulfills the Kallenberg criterion for weak relative compactness (see \cite{RandomMeasures}), and its compact closure $K_\epsilon$ verifies the tightness condition (\ref{eq_tight_compact}).

\noindent Prokhorov's theorem gives us the relative compactness of the family $(\mathcal{L_\mu^\beta})_{\beta<\beta_0}$ in $\mathcal{M}^1(\mathcal{P})$, the set of probability measures on $\mathcal{P}$, and thus concludes the proof of Proposition \ref{prop_cv_expl}.
\medskip

\noindent The rest of this section is dedicated to the proof of (\ref{eq_tight_main0}). We first show a preliminary result that will be helpful to control the number of explosions. Recall that $\xi^+=\xi_\beta^+(0)$ is the first explosion time of the diffusion $q^+$ from \eqref{SDEq+}, started from $\infty$ at time $0$.

\begin{lemma}[Lower bound on the explosion time of $q^+$]\label{lem_ctrl} ~\\
For $\beta$ small enough,
\begin{equation*}
\forall t>0, \ \mathbb{P}\big(\xi^+>t\big) \geqslant 1 -4e^{-\mu^2/(32t)}.
\end{equation*}
\end{lemma}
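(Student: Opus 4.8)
The plan is to show that, on the event $\{\xi^+\leqslant t\}$, the driving Brownian motion $W$ is forced to realise a drawdown of size essentially $\mu$ on $[0,t]$, and then to read off the bound from the Brownian tail estimate \eqref{eq_Browniantail}. The first, purely deterministic, ingredient is that \emph{explosion forces a crossing of the critical line}: since $q^+$ starts from $+\infty$ and the drift term $-\tfrac14 e^{q^+(t)/\beta}$ dominates everything else when $q^+$ is large and positive, $q^+$ reaches $0$ at some finite time $\tau_0<\xi^+$; moreover the band $\{(s,x):\,0\leqslant s\leqslant t,\ c_\mu(s)\leqslant x\leqslant 0\}$ is compact and there the whole drift of \eqref{SDEq+} lies in $[\tfrac{a-2}{4},\tfrac a4]$ (both exponential terms are then $\leqslant 1$), so $q^+$ cannot run off to $-\infty$ without first crossing strictly below $c_\mu$. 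Hence on $\{\xi^+\leqslant t\}$ the first crossing time $s_0:=\inf\{s:\,q^+(s)\leqslant c_\mu(s)\}$ satisfies $\tau_0<s_0<\xi^+\leqslant t$ and $q^+(s_0)=c_\mu(s_0)=-\mu-s_0/4$.

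Next I would integrate \eqref{SDEq+} between the last passage $\sigma:=\sup\{s<s_0:\,q^+(s)=0\}$ of $q^+$ through $0$ and the crossing time $s_0$. On $(\sigma,s_0)$ the diffusion stays in $(c_\mu(s),0)$, so using the drift bound recalled above,
\begin{equation*}
W(\sigma)-W(s_0)\;=\;q^+(\sigma)-q^+(s_0)+\int_\sigma^{s_0}\!\big(\text{drift of }q^+\big)\,\mathrm{d} u\;\geqslant\;\mu+\frac{s_0}{4}-\frac{2-a}{4}(s_0-\sigma)\;\geqslant\;\mu-\frac{t}{4}.
\end{equation*}
Thus, \emph{deterministically} on $\{\xi^+\leqslant t\}$, the oscillation of $W$ on $[0,t]$ is at least $\mu-t/4$ (indeed at least $\mu$ when $a\geqslant 1$), whence $\sup_{[0,t]}|W|\geqslant\tfrac12(\mu-\tfrac t4)$. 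For $t\leqslant 2\mu$ this gives $\sup_{[0,t]}|W|\geqslant\mu/4$, and by Brownian scaling together with \eqref{eq_Browniantail},
$$\mathbb{P}\Big(\sup_{[0,t]}|W|\geqslant \mu/4\Big)=\mathbb{P}\Big(\sup_{[0,1]}|W|\geqslant \tfrac{\mu}{4\sqrt t}\Big)\leqslant 4e^{-\mu^2/(32t)},$$
which yields $\mathbb{P}(\xi^+>t)\geqslant 1-4e^{-\mu^2/(32t)}$ on that range. For $t\geqslant \mu^2/(32\ln 4)$ the right-hand side is $\leqslant 0$ and there is nothing to prove, so only an intermediate window of $t$ (which can be nonempty only if $\mu$ is large compared with $1/(1-a)$) remains; for that window one must replace the crude loss $t/4$ above by $\epsilon(\beta)\to 0$, using that the two exponential terms of the drift are appreciably negative only when $q^+$ is within $O(\beta)$ of the barrier $0$ or of $c_\mu$, and that — by the same soft-wall/trapdoor mechanism quantified in Proposition~\ref{prop_an}(c), together with an occupation-time bound — $q^+$ spends total time $o_\beta(1)$ that close to either barrier before $s_0$.

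I expect this last sharpening to be the only real obstacle: everything else is a deterministic pathwise estimate followed by one Gaussian tail bound, and the generous constant $32$ (as opposed to the $8$ that is already available when $a\geqslant 1$) is precisely the slack that absorbs the drift correction, respectively the $\epsilon(\beta)$ error, uniformly in $t$. One could also phrase the intermediate-window step through Proposition~\ref{prop_an}, comparing $\xi^+$ with the hitting time $\xi_0^+(0)$ of $c_\mu$ by the reflected Brownian motion $r^+$ of \eqref{eq_r+} and bounding the drawdown of $W+\tfrac a4\,\cdot\,$ directly, but since that comparison only holds on an event of probability $\to 1$ and up to a fixed horizon $T$, the hands-on deterministic route above is what makes the bound hold for every $t>0$ with a single choice of $\beta_0$.
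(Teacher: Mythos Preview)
Your overall strategy---show that $\{\xi^+\leqslant t\}$ forces a Brownian drawdown of order $\mu$ on $[0,t]$ and then invoke \eqref{eq_Browniantail}---is exactly the paper's, but your execution leaves the gap you yourself flag, and that gap is entirely avoidable.

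The paper does not work on the full strip $[c_\mu(s),0]$ with the crude drift bound $\tfrac14(a-2)$. Instead it fixes a deterministic $\delta_0\in(0,\mu/4)$ and observes that on the \emph{narrower} strip $[c_\mu(s)+\delta_0,\,-\delta_0]$ both exponentials in \eqref{SDEq+} are at most $e^{-\delta_0/\beta}$, so for $\beta$ small enough the drift of $q^+$ there is $\geqslant\tfrac14(a-2e^{-\delta_0/\beta})\geqslant 0$. This is where ``$\beta$ small enough'' is actually used. Having a nonnegative drift in the strip lets the paper compare $q^+$ from below with the driftless reflected Brownian motion $\widehat q(t)=-\delta_0+W(t)-\sup_{s\leqslant t}W(s)\vee 0$, and then \eqref{eq_drawdown} with $\delta=\mu/2$ shows that on $\mathcal E=\{\sup_{[0,t]}|W|<\mu/4\}$ the process $\widehat q$ never drops below $-\delta_0-\mu/2>-\mu+\delta_0\geqslant c_\mu(s)+\delta_0$, so $q^+$ never exits the strip and in particular cannot explode. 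This yields the stated bound for \emph{every} $t>0$ in one stroke, with no case analysis in $t$ and no occupation-time argument.

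In short, your ``intermediate window'' difficulty is self-inflicted: it disappears once you replace the bound $\text{drift}\geqslant(a-2)/4$ on $[c_\mu,0]$ by $\text{drift}\geqslant 0$ on $[c_\mu+\delta_0,-\delta_0]$, which is precisely the sharpening you describe in words but do not carry out. The occupation-time route you sketch would also work but is considerably heavier than the two-line comparison the paper uses.
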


\begin{proof} We fix a deterministic $\delta_0$ such that $0<\delta_0<\mu/4$. Recall the definition of the critical line $c_\mu$ in \eqref{criticalline}. When the diffusion $q^+$ is in the region between $-c_\mu(t)+\delta_0$ and $-\delta_0$, we have the lower bound, for $\beta$ small enough:
\begin{equation}\label{eq_ctrl_bound}
a-e^{q^+(t)/\beta}-e^{-\frac{1}{\beta}(\mu(t)+q^+(t))} \geqslant a-2e^{-\delta_0/\beta}\geqslant 0.
\end{equation}
Introduce the diffusion $\widehat{q}$ on $\mathbb{R}_+$, defined as the Brownian motion $W(t)$ started from $-\delta_0$ at time $0$ and reflected downwards at $-\delta_0$:
$\widehat{q}(t) := -\delta_0+W(t)-\sup_{s \leqslant t} \big(W(s)\big)\vee 0.$
The bound (\ref{eq_ctrl_bound}) shows that, for $\beta$ small enough, the diffusion $q^+$ is bounded from below by the diffusion $\widehat{q}$ on each Brownian trajectory, up until the first hitting time of $c_\mu(t)+\delta_0$ by $q^+$. 

\medskip

\noindent Set $t>0$ and introduce the event $\mathcal{E} := \big\lbrace\underset{s \in [0, t]}\sup\big\lvert W(s) \big\rvert <\mu/4\big\rbrace$. By the Brownian tail bound (\ref{eq_Browniantail}), we have $\mathbb{P}(\mathcal{E})\geqslant 1 -4e^{-\mu^2/(32t)}$. Besides, on the event $\mathcal{E}$, using (\ref{eq_drawdown}),  
\begin{equation*}
\forall s \in [0, t], \ \widehat{q}(s) > -\delta_0-\mu/2.
\end{equation*}
This means that the diffusion $\widehat{q}$ stays above $-\mu$ until time $t$. Thus, for $\beta$ small enough so that (\ref{eq_ctrl_bound}) holds, on the event $\mathcal{E}$, we have $\xi^+>t$.
\end{proof}

We now turn to the proof of (\ref{eq_tight_main0}). Fix $\epsilon >0$. We control the diffusion $q_\mu^\beta$ with two diffusions. The first diffusion $Q_1$ starts at time $T_\epsilon$ at position $-1$ and is reflected below the horizontal line $-1$ with drift $a/8$. The second diffusion $Q_2$ starts at time $2T$ at position $c_\mu(T_\epsilon)$, has a drift $a/8$ as well and is also reflected below $-1$. 

\medskip

\noindent We can choose $T_\epsilon$ high enough such that the diffusions $Q_1$ and $Q_2$ do not hit  $c_\mu(t)+1$ with probability greater than $1-\epsilon/10$. Indeed, the sublinearity of the Brownian motion $W(T_\epsilon+t)-W(T_\epsilon)$ is such that
\begin{equation} \label{eq_Teps}
\exists \ T_0, \forall t \geqslant T_0, \lvert W(T_\epsilon+t)-W(T_\epsilon) \rvert <t/16  \text{ with probability greater than } 1-\epsilon/40.
\end{equation}
On the event where (\ref{eq_Teps}) holds, the diffusion $Q_1$ stays above $-1-t/8$ after time $T_0$, and thus above the critical line $c_\mu(t)$. We now choose $T_\epsilon$ high enough so that $\lvert W(T_\epsilon+t)-W(t) \rvert <c_\mu(T)/4$ with probability greater than $1-\epsilon/40$ until time $T_0$, thus $Q_1$ stays above $c_\mu(t)$ before time $T_0$ as well. Therefore, on an event of probability greater than $1-\epsilon/20$, the diffusion $Q_1$ never hits $c_\mu(t)$. Similar arguments can be used for the second diffusion $Q_2$.

\medskip

\noindent The term $-(a+1)/4$ in the drift of the diffusion $q^-$ implies the existence of $\alpha>0$ such that, almost surely, when started before time $2T_\epsilon+1$, the diffusion $q^-$ explodes before time $\alpha T_\epsilon$. 

\medskip

\noindent If, at time $T_\epsilon$, the diffusion $q_\mu^\beta$ evolves as the diffusion $q^-$, then it almost surely explodes before time $\alpha T_\epsilon$, after which it evolves as $q^+$ and stays above $Q_1$ (for $\beta$ small enough such that $2e^{-1/\beta}<a/8$) and does not explode anymore.
\newline If, at time $T_\epsilon$, the diffusion $q_\mu^\beta$ evolves as the diffusion $q^+$, then we distinguish between three cases:
\newline First, if the diffusion $q_\mu^\beta$ hits $-1$ between times $T_\epsilon$ and $2T_\epsilon$, then $q_\beta$ stays above $Q_1$ and therefore does not explode.
\newline Else, following the proof of property $(c)$ from Proposition \ref{prop_an} in Section \ref{sec_6_expl}, we can choose a deterministic level $\delta_1>0$ so that, if $q_\mu^\beta$ reaches $t \mapsto c_\mu(t)  + \delta_1$ between time $T_\epsilon$ and $2T_\epsilon$, then it explodes before time $2T_\epsilon +1$ with probability greater than $1-\eps/10$. After that, $q_\mu^\beta$ behaves as $q^-$ and almost surely explodes one last time before time $\alpha T_\epsilon$, as previously.
\newline Finally, if the diffusion $q_\mu^\beta$ starts above $c_\mu(T_\epsilon)+ \delta_1$ at time $T_\epsilon$ and stays in the interval $[c_{\mu}(t) + \delta_1,-1]$ for all $t \in [T_\epsilon,2T_\epsilon]$, then it is bounded from below by a Brownian motion with a positive drift $a/8$, and therefore it will be above $c_\mu(T)$ at time $2T_\epsilon$ with probability greater than $1-\epsilon/10$. In this event, $q_\mu^\beta$ stays above the diffusion $Q_2$ after time $2T_\epsilon$ and thus does not explode.

\medskip

\noindent Gathering the different cases, we thus obtain the existence of an event of probability greater than $1-\epsilon/2$ on which $q_\mu^\beta$ explodes at most once after time $T_\epsilon$ and does not explode after time $\alpha T_\epsilon$. 

\medskip

\noindent To conclude the proof of the tightness criterion (\ref{eq_tight_main0}), we apply Lemma \ref{lem_ctrl} to get the existence of a finite $N_\epsilon$ and of an event of probability greater than $1-\epsilon/2$ on which the diffusion $q_\mu^\beta$ explodes at most $N_\epsilon-1$ times before time $T_\epsilon$.

\section{Conclusion}

We proved that, in the high temperatures limit $\beta \rightarrow 0$, the properly rescaled point process of the low-lying eigenvalues of the stochastic Bessel operator converges towards a simple point process on $\mathbb{R}_+$, described using coupled SDEs. This limiting point process keeps a repulsive factor and therefore differs from the Poisson point process found by Dumaz and Labbé \cite{TheStochasticAiryOperatorAtLargeTemperature} as the high temperatures limit of the Stochastic Airy operator. Our result opens research perspectives to understand the properties of this new point process.

\newpage
\section*{Appendix: Proof of Lemma \ref{lem_an_tube_hitting}}\label{ann_C}

Recall that $l_1=\beta^{3/4}$ and $l_2=\beta^{1/6}$. In this proof we denote by $q$ the diffusion $\overline{q}_0^+$. Set $\gamma<0$, $\tau := \inf\big\lbrace t \geqslant 0, \ q_0^+(t)=l_2 \big\rbrace$ and $\tau':=\inf\big\lbrace t \geqslant 0, \ q_0^+(t)=\gamma \big\rbrace$. Let $f_\beta(x):=\frac{1}{4}\big(a-\exp(x/\beta)\big)$. To compute the hitting times of $q$, introduce the scale functions $\mathfrak{s}_\beta$ and $\mathfrak{s}$:
\begin{align*}
\mathfrak{s}_\beta(x) &:= \int_{-1}^x\exp\Big(-2\int_0^u f_\beta(v)\mathrm{d} v\Big)\mathrm{d} u, \\
\mathfrak{s}(x) &:= \int_{-1}^x\exp\Big(-2\int_0^u \frac{a}{4}\mathrm{d} v\Big)\mathrm{d} u=\frac{2}{a}\big(e^{a/2}-e^{ax/2 }\big).
\end{align*}
The following lemma explicits the asymptotic behavior of $\mathfrak{s}_\beta$.

\begin{lemma}[Convergence of the scale functions] \label{lem_an_tube_scale}~\\
For any $x_0<0$, 
\begin{equation*}
\mathfrak{s}'_\beta\longrightarrow \mathfrak{s}' \text{ and } \mathfrak{s}_\beta \longrightarrow \mathfrak{s} \text{ uniformly on }[x_0, 0].
\end{equation*}
Furthermore, $\mathfrak{s}_\beta\big(l_2\big)\longrightarrow \infty$.
\end{lemma}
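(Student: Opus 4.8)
The plan is to estimate the inner integral $\int_0^u f_\beta(v)\,\mathrm{d} v$ and then pass it through the outer integral defining $\mathfrak{s}_\beta$. First I would compute explicitly
\begin{equation*}
\int_0^u f_\beta(v)\,\mathrm{d} v = \frac{1}{4}\Big(au - \beta\big(e^{u/\beta}-1\big)\Big),
\end{equation*}
so that $\mathfrak{s}'_\beta(x) = \exp\!\big(-\tfrac{a}{2}x + \tfrac{\beta}{2}(e^{x/\beta}-1)\big)$. On a fixed interval $[x_0,0]$ with $x_0<0$, the term $\tfrac{\beta}{2}(e^{x/\beta}-1)$ is bounded in absolute value by $\tfrac{\beta}{2}$ (since $0\leqslant 1-e^{x/\beta}\leqslant 1$ there), hence it converges to $0$ uniformly on $[x_0,0]$. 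Therefore $\mathfrak{s}'_\beta \to \mathfrak{s}'$ uniformly on $[x_0,0]$, and integrating from $-1$ gives the uniform convergence $\mathfrak{s}_\beta \to \mathfrak{s}$ on $[x_0,0]$ as well (uniform convergence of derivatives on a bounded interval, together with agreement of the values at the base point $-1$, transfers to the antiderivatives).

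For the last assertion, I would bound $\mathfrak{s}_\beta(l_2)$ below by the contribution of the outer integral over a subinterval of $[0,l_2]$ where the integrand is already enormous. Concretely, $\mathfrak{s}'_\beta(u) = \exp\!\big(-\tfrac{a}{2}u + \tfrac{\beta}{2}(e^{u/\beta}-1)\big) \geqslant \exp\!\big(-\tfrac{a}{2}l_2 + \tfrac{\beta}{2}(e^{u/\beta}-1)\big)$ for $u\in[0,l_2]$, and since $l_2=\beta^{1/6}\to 0$ the prefactor $e^{-al_2/2}\to 1$. Now for $u$ ranging over, say, $[l_2/2, l_2]$ we have $e^{u/\beta}\geqslant e^{l_2/(2\beta)} = e^{\beta^{-5/6}/2}$, so
\begin{equation*}
\mathfrak{s}_\beta(l_2) \geqslant \int_{l_2/2}^{l_2} \mathfrak{s}'_\beta(u)\,\mathrm{d} u \geqslant \frac{l_2}{2}\,e^{-al_2/2}\exp\!\Big(\tfrac{\beta}{2}\big(e^{\beta^{-5/6}/2}-1\big)\Big),
\end{equation*}
and the doubly exponential factor $\exp(\tfrac{\beta}{2}e^{\beta^{-5/6}/2})$ blows up far faster than $l_2/2=\beta^{1/6}/2$ decays, so $\mathfrak{s}_\beta(l_2)\to\infty$.

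There is no real obstacle here: everything reduces to the explicit antiderivative of $f_\beta$ and elementary asymptotics. The only point that requires a moment's care is making sure the uniform convergence is claimed on a \emph{fixed} negative interval $[x_0,0]$ (where the exponential term is tame) rather than on a region like $[0,l_2]$ (where it is not) — the two regimes are handled by the two separate statements of the lemma, and this split is exactly what makes the lower and upper bounds in the proof of Lemma \ref{lem_an_tube_hitting} work.
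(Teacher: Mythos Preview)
Your argument is correct and follows essentially the same route as the paper: compute the explicit antiderivative of $f_\beta$, observe that the correction term $\tfrac{\beta}{2}(e^{x/\beta}-1)$ vanishes uniformly on $[x_0,0]$, and for the divergence of $\mathfrak{s}_\beta(l_2)$ lower-bound the outer integral by its contribution over a short subinterval of $[0,l_2]$ where the doubly exponential factor already dominates. The only cosmetic difference is that the paper uses the subinterval $[l_1,l_2]=[\beta^{3/4},\beta^{1/6}]$ (and a monotonicity argument for $\mathfrak{s}'_\beta$) whereas you use $[l_2/2,l_2]$; both choices work for the same reason.
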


\noindent Since $\mathfrak{s}_\beta\big(q(\cdot)\big)$ is a local martingale, $\mathfrak{s}_\beta\big(q(\cdot\wedge \tau(q) \wedge \tau'(q)\big)$ is a martingale. By the stopping theorem, we get:
\begin{equation*}
\mathbb{P}\big(\tau<\tau'\big) = \frac{\mathfrak{s}_\beta(\gamma)-\mathfrak{s}_\beta\big(0\big)}{\mathfrak{s}_\beta(\gamma)-\mathfrak{s}_\beta\big(l_2\big)}.
\end{equation*}
Lemma \ref{lem_an_tube_scale} readily implies that this probability tends to $0$ as $\beta$ tends to $0$.

\begin{proof}[Proof of Lemma \ref{lem_an_tube_scale}]
We have, for all $x \leqslant 0$,
\begin{equation*}
\bigg\lvert \int_0^x f_\beta(v)-\frac{a}{4}\mathrm{d} v \bigg\rvert \leqslant \int_x^0 e^{v/\beta}\mathrm{d} v \leqslant \beta  \longrightarrow 0,
\end{equation*}
which means that $\ln \mathfrak{s}'_\beta$ converges uniformly to $\ln \mathfrak{s}'$ on $]-\infty,0]$.
Besides, the functions $\ln \mathfrak{s}'_\beta$ and $\ln \mathfrak{s}'$ are bounded on $[x_0, 0]$, and $x\mapsto e^x$ is uniformly continuous on $[x_0, 0]$ so $\mathfrak{s}'_\beta$ converges uniformly to $\mathfrak{s}'$ on $[x_0,0]$. Therefore, $\mathfrak{s}_\beta$ converges uniformly to $\mathfrak{s}$ on $[x_0,0]$.

\noindent Now turning to $\mathfrak{s}_\beta\big(l_2\big)$. We can compute explicitly:
\begin{equation*} \label{eq_an_tube_scale}
\ln \mathfrak{s}_\beta^{'} (x) = -2\int_0^x f_\beta(v)\mathrm{d} v = -2ax+2\int_0^x e^{v/\beta}\mathrm{d} v = -2ax + 2\beta\big(e^{x/\beta}-1\big).
\end{equation*}
A study of the variations of $\ln \mathfrak{s}'_\beta$ shows that $\mathfrak{s}'_\beta$ decreases before $\beta\ln(2a)$ and increases afterwards.
For $\beta$ small enough, $\ln \mathfrak{s}'_\beta(l_1)\geqslant \beta e^{l_1/\beta}$, which tends to $\infty$ as $\beta$ tends to $0$, and $\mathfrak{s}'_\beta$ increases on $\big[l_1,l_2\big]$, so that:
\begin{equation*}
\mathfrak{s}_\beta\big(l_2\big)\geqslant \int_{l_1}^{l_2}\mathfrak{s}'_\beta(v)\mathrm{d} v \geqslant \big(l_2-l_1\big)\mathfrak{s}'_\beta(l_1),
\end{equation*}
therefore $\mathfrak{s}_\beta\big(l_2\big)\longrightarrow \infty$.
\end{proof}


\acks 
\noindent I wish to thank Laure Dumaz for her precious help throughout this research project.

\fund 
\noindent This research paper presents results found during my PhD thesis \cite{Magaldi}, funded with a research grant from the SDOSE Doctoral School and PSL University.

\competing 
\noindent There were no competing interests to declare which arose during the preparation or publication process of this article.


\bibliographystyle{APT}
\bibliography{biblio}


\end{document}